\newcommand{\BF}{\mathbb F}
\newcommand{\BN}{\mathbb N}
\newcommand{\BQ}{\mathbb Q}
\newcommand{\BR}{\mathbb R}
\newcommand{\BZ}{\mathbb Z}
\newcommand{\ov}{\overline}
\newtheorem{Satz}{General Criterion}[section]
\newtheorem{Thm}[Satz]{Theorem}
\newtheorem{CThm}[Satz]{Characterization Theorem}
\newtheorem{RThm}[Satz]{p-adic Representation Theorem}
\newtheorem{Remark}[Satz]{Remark}
\newtheorem{Lemma}[Satz]{Lemma}
\newtheorem{MLemma}[Satz]{Main Lemma}
\newtheorem{Proposition}[Satz]{Proposition}
\begin{document}
\begin{center}
\textbf{\Large{Algebraic Characterization of Rings \\
of Continuous $p$-adic Valued Functions}} \\
\strut \\
Samuel Volkweis Leite\footnote{This paper contains the main result
of the Ph.D. Thesis \cite{L} of the first author written under the
supervision of the second author.} and Alexander Prestel\label{F1} \\
\end{center}
\strut \\

\textbf{Abstract} The aim of this paper is to characterize among the
class of all commutative rings containing $\BQ$ the rings
$C(X,\BQ_p)$ of all continuous $\BQ_p$-valued functions on a compact
space $X$. The characterization is similar to that of M. Stone from
1940 (see \cite{St}) for the case of $\BR$-valued functions. The
Characterization Theorem 4.6 is a consequence of our main result,
the $p$-adic
Representation Theorem 4.5. \\

\section{Introduction}
The ring $C (X, \BR)$ of all $\BR$-valued continuous functions on a
compact space $X$ is an $\BR$-Banach algebra. Not surprisingly there
are numerous characterizations of these rings among the class of all
$\BR$-Banach algebras (see e.g. \cite{A-K}). What is, however,
surprising is M. Stone's purely algebraic characterization of the
rings $C (X, \BR)$ among the class of all commutative rings $A$
containing $\BQ$. The secret of Stone's approach is that he encodes
the space $X$ in a simple algebraic subset $T$ of $A$. Let us
briefly indicate this
approach in modern language. \\

A subset $T$ of a commutative ring $A$ with $\BQ \subseteq A$ is
called a \textit{pre-ordering} of $A$ if it satisfies \vspace{-6pt}
\begin{equation*}
T + T \subseteq T, \quad T \cdot T \subseteq T, \quad a^2 \in T
\textrm{ for all } a \in A, \quad - 1 \not \in T.
\end{equation*}

If the set of sums of squares of $A$ does not contain $-1$, this set
is a pre-ordering of $A$. In case of $A = C (X, \BR)$, the set of
squares already forms a pre-ordering \footnote{Although
pre-orderings on commutative rings have already been used by M.
Stone, the notation ``pre-ordering'' was introduced much later by
Krivine in a systematic study \cite{Kr}.}. The totality of
pre-orderings on $A$ is partially ordered by inclusion and it
carries a natural topology making it a quasi-compact space. The
\textit{real spectrum} of $(A, T_0)$ is the closed set of
pre-orderings $P \supseteq T_0$ satisfying in addition $P \cup - P =
A \textrm{ and } P \cap -P \textrm{ a prime ideal of } A$.

These objects are usually called \textit{orderings} of $A$ (see
\cite{P-D}). The maximal spectrum $X$ of $(A, T_0)$ yields an
isomorphism $A \cong C (X, \BR)$ if $T_0$ satisfies the conditions
required by Stone. Without going into further details let us mention
only the
crucial step in proving this isomorphism. \\

$T_0$ is called \textit{archimedean} if to every $a \in A$ there
exists some $n \in \BN$ such that $n - a \in T_0$. Then the crucial
step is the \textit{Local-Global-Principle}: If $a \in A$ is
strictly positive for all $P \in X$ (i.e. $a \in P \smallsetminus
(-P)$), then $a \in T_0$. In this sense the pre-ordering $T_0$
encodes the space $X$. In case of the polynomial ring $A = \BR [X_1,
\ldots, X_n ]$, for suitable $T_0$ this principle is Schm\"{u}dgen's
famous Positivstellensatz (see
\cite{P-D}, Theorem 5.2.9). \\

In the present paper we treat in a similar way the rings $C(X,
\BQ_p)$ of all $\BQ_p$-valued continuous functions on a compact
space $X$. We end up with a purely algebraic characterization of
these rings among the class of commutative rings $A$ containing
$\BQ$. In order to achieve this, we introduce certain subsets $|$ of
$A \times A$ called $p$-\textit{divisibilities} \footnote{Compared
with the real situation one could as well call them
``pre-$p$-valuations''.}. The totality $D_p (A)$ of
$p$-divisibilities of $A$ is partially ordered by inclusion and
admits a canonical topology making it a quasi-compact space. We call
a $p$-divisibility $|$ a \textit{p-valuation}(-divisibility) if for
all $a, b \in A$ we have \textit{totality}: $a \ | \ b$  or $b \ | \
a$,  and \textit{cancellation}: $0 \nmid c, \ ac \ | \ bc
\Rightarrow a \ | \ b$.

The class of $p$-valuations $|$ extending a given $p$-divisibility
$|_0$ forms a closed subspace Spec $D_p (A, |_0)$, called the
$p$-adic valuation spectrum above $|_0$. Let $X$ denote the maximal
spectrum $\textrm{Spec}^{\textrm{max}} D_p (A, |_0)$. Finally we
call $|_0$ $p$-\textit{archimedean} if for all $a \in A$ there
exists $n \in \BN$ such that $p^{-n} \ | \ a$. The crucial step in
our approach then is the \textit{Local-Global-Principle:}
\begin{center}If $p \ | \ a$ for all $| \in X$, then $p \ |_0 \
a$.\end{center}

This principle is essential for encoding the $p$-adic valuation
spectrum above $|_0$ in the simple algebraic notion of the
$p$-divisibility $|_0$. Compared with the pre-ordering case, the
extension theory of $p$-divisibilities is considerably more
difficult. In case of the integral domain $A= \BQ_p [X_1, \ldots,
X_n]$ the local-global-principle parallels Roquette's profound
result on the ``Kochen-ring'' of $\BQ_p (X_1, \ldots, X_n)$ in
\cite{R}.

\section{Divisibilities on commutative rings}

Let $A$ be a commutative ring with unit $1 \not = 0$. A binary
relation $a \mid b$ on $A$ (in set theoretic terms we shall write
$\mid \subseteq A \times A$) will be called a \textit{divisibility}
on $A$, if for all $a, b, c \in A$ we have \vspace{-6pt}
\begin{enumerate}
\item[(1)] $a \mid a$ \vspace{-6pt} \item[(2)] $a \mid b, b \mid c \Rightarrow a \mid
c$ \vspace{-6pt} \item[(3)] $a \mid b, a \mid c \Rightarrow a \mid
b-c$ \vspace{-6pt} \item[(4)] $a \mid b \Rightarrow ac \mid bc$
\vspace{-6pt} \item[(5)] $0 \nmid 1$.
\end{enumerate}

Easy consequences from these axioms are e.g. $a  \ | \ 0$ and $a \ |
\ -a$. The set $I(\mid) := \{a \in A; 0 \mid a \}$ is a proper ideal
of $A$. For all $\alpha, \beta \in I (\mid)$ and $a, b \in A$ we
have $a \mid b \Rightarrow a + \alpha \mid b+ \beta$.

It follows that \vspace{-6pt}
\begin{equation*}
a + I \mid b + I : \Leftrightarrow a \mid b \vspace{-6pt}
\end{equation*}
defines a divisibility on the quotient ring $\overline{A} = A /
I(|)$. The ideal $I(|)$ will be called the \textit{support} of $|$.

Clearly, if $\delta : A \rightarrow B$ is a homomorphism of
commutative rings with $1$, i.e., $\delta (1) = 1$, and $|$ is a
divisibility on $B$, then \vspace{-6pt}
\begin{equation*}
a |' b : \Leftrightarrow \delta(a) | \delta (b) \vspace{-6pt}
\end{equation*}
defines a divisibility $|'$ on $A$ with support $I(|') = \delta^{-1}
(I(|))$.

We call a divisibility $|$ on $A$ \textit{total} if for all $a, b
\in A$ we have $a | b$ or $b | a$. We shall say that $|$ admits
\textit{cancellation} if for all $c \not \in I(|)$ (i.e., $0 \nmid
c$), $ac | bc$ implies $a|b$. If $|$ is total and admits
cancellation, we shall also call $|$ a \textit{valuation
divisibility}.

\begin{Proposition}
If the divisibility $|$ has cancellation, then $I(|)$ is prime.
\end{Proposition}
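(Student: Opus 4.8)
The plan is to verify the defining property of a prime ideal directly, using that $I(|)$ has already been observed to be a proper ideal of $A$. So I would fix $a, b \in A$ with $ab \in I(|)$, i.e. $0 \mid ab$, and assume $b \notin I(|)$, i.e. $0 \nmid b$; the goal is then to show $a \in I(|)$, i.e. $0 \mid a$.

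The key observation is that the hypothesis $0 \mid ab$ is already in the shape to which the cancellation property applies: since $0 \cdot b = 0$, it reads $0 \cdot b \mid a \cdot b$. Now $b \notin I(|)$ is exactly the condition $0 \nmid b$ needed to invoke cancellation with $c = b$, $a$ playing the role of the numerator and $0$ that of $0$; this yields $0 \mid a$, that is, $a \in I(|)$. Combined with $1 \notin I(|)$ — which is immediate from axiom (5), $0 \nmid 1$ — this shows $I(|)$ is a proper prime ideal.

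I do not expect any genuine obstacle here: no use of totality is required, and the only point to notice is that $ab \in I(|)$ is literally an instance of $xc \mid yc$ with $x = 0$, $y = a$, $c = b$, so a single application of cancellation closes the argument.
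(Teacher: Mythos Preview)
Your proof is correct and is essentially the same as the paper's: you rewrite $0 \mid ab$ as $0 \cdot b \mid a b$ and cancel the element not in $I(|)$, exactly as the paper does (with the roles of $a$ and $b$ interchanged). The additional remark that $I(|)$ is proper via axiom~(5) is fine and was already noted in the paper prior to the proposition.
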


\begin{proof}
Assume that $0 | ab$ and $0 \nmid a$. Then cancelling $a$ in $0 \cdot
a | ba$ gives $0 | b$.
\end{proof}

\noindent \textbf{Example 2.2} Let $A$ be an integral domain and $F
= \textrm{ Quot } A$. Then every subring $B$ of $F$ defines a
divisibility $|$ on $A$ by taking \vspace{-12pt}
\begin{equation*}
a | b : \Leftrightarrow a = b = 0 \textrm{ or } (a \not = 0 \textrm{
and } \frac{b}{a} \in B).
\end{equation*}
Note that $|$ clearly has cancellation and $I(|) = \{ 0 \}$.
Conversely, if $|$ is a divisibility with cancellation and $I(|) =
\{ 0 \}$ on $A$, then \vspace{-6pt}
\begin{equation*}
B := \{ \frac{b}{a}; a, b \in A, a | b, a \not = 0 \} \cup \{ 0 \}
\vspace{-6pt}
\end{equation*}
is a subring of $F$.

It is clear that $| \leftrightarrow B$ is a $1 - 1$ correspondence.
Note that $|$ is total if and only if $B$ is a valuation ring of
$F$. Note also that $A$ need not be a subring of $B$. For example
let $A = \BR[X]$ and $B$ the valuation ring of the degree valuation
on $F = \BR(X)$. Then $A \cap B = \BR$. \\

\noindent \textbf{Example 2.3} Let $v:A \rightarrow \Gamma \cup \{
\infty \}$ be a \textit{valuation in the sense of Bourbaki}, i.e.,
$\Gamma$ is an ordered abelian group $I = v^{-1} (\infty)$ is a
prime ideal of $A, \overline{v} : F \rightarrow \Gamma \cup \{
\infty \}$ is an ordinary valuation on the field $F = \textrm{Quot }
\overline{A}$ with $\overline{A} = A / I$, and $v(a) = \overline{v}
(\overline{a})$ for all $a \in A$. Then \vspace{-6pt}
\begin{equation*}
a | b \Leftrightarrow v(a) \leq v(b) \vspace{-6pt}
\end{equation*}
defines a divisibility on $A$ with $I(|) = I$ prime. Clearly $|$ has
cancellation and is total. \\

Our main example here is $A = F = \BQ_p$ and $v_p : \BQ_p
\rightarrow \BZ \cup \{ \infty \}$. We then call \vspace{-6pt}
\begin{equation*}
a |_p b \Leftrightarrow v_p (a) \leq v_p (b) \vspace{-6pt}
\end{equation*}
the \textit{canonical $p$-adic divisibility}. \\

\noindent \textbf{Example 2.4} Let $A = C(X, \BQ_p)$ be the ring of
all continuous functions $f:X \rightarrow \BQ_p$ where $X$ is a
compact space. We call \vspace{-6pt}
\begin{equation*}
f |^* g \Leftrightarrow \forall x \in X (v_p(f(x)) \leq v_p (g(x)))
\vspace{-6pt}
\end{equation*}
the \textit{canonical $p$-adic divisibility} on $A$. If $X$ is
finite and has more than one point, then $|^*$ has no cancellation,
is not total, and $I(|^*) = \{ 0 \}$, but not prime. \\

A valuation $v:F \twoheadrightarrow \Gamma \cup \{ \infty \}$ on a
field $F$ of characteristic $0$ is called a \textit{p-valuation} if
$\Gamma$ is a discretely ordered abelian group with $v(p)$ as
minimal positive element and the residue field $\overline{F}$ of $v$
is the finite field $\BF_p$ of $p$ elements. $(F, v)$ is called
\textit{p-adically closed} if $v:F \twoheadrightarrow \Gamma \cup \{
\infty \}$ is a $p$-valuation, $(F,v)$ is henselian, and the
quotient group $\Gamma / \BZ v(p)$ is divisible. Clearly, $(\BQ_p,
v_p)$ is $p$-adically closed. Every $p$-valued field admits an
algebraic extension that is $p$-adically closed, called a $p$-adic
closure. $p$-adic closures are in general not unique up to
isomorphism. In case $\Gamma = \BZ$, the $p$-adic closure is unique
up to isomorphism as it is the henselization. For more information
the reader is refered to \cite{P-R}.

Returning to a $p$-valued field $(F,v)$ let us simply write $1$ for
the positive minimal value $v(p)$. For every $x \in F$ the quotient
\vspace{-6pt}
$$
\gamma (x) = \frac{1}{p} \cdot \frac{x^p - x}{(x^p - x)^2 -1}
\vspace{-6pt}
$$
is defined and has value $\geq 0$. The operator $\gamma$ is usually
called the \textit{Kochen operator}. It plays in the theory of
$p$-valued fields a similar role as the square operator does in the
theory of pre-ordered fields. \\

\setcounter{Satz}{4} \begin{Thm} Let $F$ be a field of
characteristic $0$ and let $B$ be a subring of $F$ containing the
ring $\BZ [\gamma(F)]$ generated by all $\gamma (x)$ for $x \in F$.
If $p^{-1} \not \in B$, then $B$ is contained in the valuation ring
$O_v$ of some $p$-valuation $v$ on $F$.
\end{Thm}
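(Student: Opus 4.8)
The plan is to reduce the statement to ordinary valuation theory: first produce \emph{some} valuation ring of $F$ that contains $B$ and has $p$ in its maximal ideal, then show that, because $B$ contains all Kochen values $\gamma(x)$, that ring is automatically the valuation ring of a $p$-valuation. The mechanism behind the second part is that each of the two defining features of a $p$-valuation --- residue field $\BF_p$, and $v(p)$ the least positive element of the value group --- is forced by the single inequality ``$v(\gamma(x))\ge 0$ for all $x$'' once $v(p)>0$. Concretely: since $p\in\BZ\subseteq B$ and $p^{-1}\notin B$, the ideal $pB$ is proper, hence contained in a maximal ideal $\mathfrak M$ of $B$. By Chevalley's extension theorem applied to the local ring $B_{\mathfrak M}$ (and, if $\mathrm{Frac}(B)\subsetneq F$, by further extending the valuation along $\mathrm{Frac}(B)\subseteq F$) there is a valuation ring $O$ of $F$ with $B\subseteq O$ and $\mathfrak M\subseteq\mathfrak m_O$, so $p\in\mathfrak m_O$. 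Write $v$ for the associated valuation and $\Gamma=v(F^{\times})$; then $v(p)>0$, the residue field has characteristic $p$, and $\BZ[\gamma(F)]\subseteq B\subseteq O$ gives $v(\gamma(x))\ge 0$ whenever $\gamma(x)$ is defined. It now suffices to prove $v$ is a $p$-valuation, since then $B\subseteq O=O_v$.

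Next I would show the residue field is $\BF_p$. If not, some $u_0\in O$ has $u_0^p-u_0\notin\mathfrak m_v$. The coset $u_0+\mathfrak m_v$ is infinite (it contains $u_0+p^k$ for every $k$), and only finitely many of its members $u$ satisfy $u^p-u=\pm1$, so I may choose $u$ in the coset with $\gamma(u)$ defined. Putting $y=u^p-u$, one has $v(y)=0$ (since $y$ is a unit) and $v(y^2-1)\ge 0$ (since $y^2-1\in O$), hence $v(\gamma(u))=-v(p)+v(y)-v(y^2-1)\le -v(p)<0$, contradicting $v(\gamma(u))\ge 0$. So every residue $t$ satisfies $t^p=t$, which for a field of characteristic $p$ forces the residue field to be $\BF_p$. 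Note the estimate uses only $v(y^2-1)\ge 0$, so no separate argument for small $p$ is needed.

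Then I would show $v(p)$ is the minimal positive element of $\Gamma$. If some $g\in\Gamma$ had $0<g<v(p)$, choose $x\in F^{\times}$ with $v(x)=g$; since $g>0$ we get $v(x^p)=pg>g$, so $y:=x^p-x$ has $v(y)=g$ and $y^2\in\mathfrak m_v$, hence $y^2-1$ is a nonzero unit and $\gamma(x)$ is defined, with $v(\gamma(x))=-v(p)+g-0=g-v(p)<0$ --- again impossible. Hence $v(p)$ is least positive in $\Gamma$, i.e.\ $\Gamma$ is discretely ordered with minimal positive element $v(p)$; together with the previous paragraph this says exactly that $v$ is a $p$-valuation, and the theorem follows.

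I expect the argument to be genuinely short, so the only things that need care are two routine points: invoking the correct existence statement for a dominating valuation ring in the first step when $B$ does not already generate $F$ as a field, and ensuring, in the two contradiction arguments, that the element fed to $\gamma$ avoids the finitely many poles $(x^p-x)^2=1$ --- both handled by a sufficiently generic choice. The remaining bookkeeping is just keeping track of $\gamma$ being a partially defined operator and confirming that $\BZ[\gamma(F)]\subseteq O$ says precisely $v(\gamma(x))\ge 0$ wherever $\gamma(x)$ is defined.
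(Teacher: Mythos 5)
Your proof is correct and follows the same route as the paper: find a prime (here maximal) ideal of $B$ containing $p$, apply Chevalley's extension theorem to obtain a valuation ring $O_v\supseteq B$ of $F$ with $p$ in its maximal ideal, and then observe that $\gamma(F)\subseteq O_v$ together with $v(p)>0$ forces $v$ to be a $p$-valuation. The only difference is that the paper outsources this last step to Lemma 6.1 of Prestel--Roquette, whereas you prove it directly; your two value computations with the Kochen operator (residue field $\BF_p$ via $v(u^p-u)=0$, and minimality of $v(p)$ via $v(x^p-x)=v(x)$) are exactly right, including the remark that $v(y^2-1)\geq 0$ is all that is needed.
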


\begin{proof}
Clearly, $p$ is not a unit of $B$. Thus there exists a prime ideal
$P$ of $B$ with $p \in P$. By Chevalley's place extension theorem
(\cite{E-P}, ch 3.1) there exists a valuation $v$ of $F$ such that
$O_v \supseteq B$ and $M_v \cap B = P$. Since now the valuation ring
$O_v$ containes $\BZ [\gamma (F)]$, but not $p^{-1}$, $v$ is a
$p$-valuation by \cite{P-R}, Lemma 6.1.
\end{proof}

Motivated by this theorem we call a divisibility $|$ on a
commutative ring with $1 \not = 0$, a \textit{p-divisibility} if it
satisfies for all $a,b \in A$ \vspace{-6pt}
\begin{itemize}
\item[(6)] $0 \nmid a \Rightarrow pa \nmid a$, and \vspace{-6pt}
\item[(7)] $p [(a^p b - b^p a)^2 - (b^{p+1})^2] \ | \ [(a^p b - b^p a)
b^{p+1}]$.
\end{itemize}
Note that (6) implies (5). In fact, $0 | 1$ gives $p | 0 | 1$,
contradicting (6).

\begin{Thm}Let $A$ be an integral domain with $\BQ \subseteq A$
and $F =$ \textrm{Quot} $A$ its field of fractions. Then there is a
$1 - 1$ correspondence between $p$-valuation rings $B \subseteq F$
and total $p$-divisibilities $|$ of $A$ that have cancellation and
support $I(|) = \{ 0 \}$.
\end{Thm}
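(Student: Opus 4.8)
The plan is to reduce everything to Example 2.2. By that example the rule $a \mid b :\Leftrightarrow a = b = 0$ or ($a \neq 0$ and $b/a \in B$), with inverse $B = \{\, b/a : a,b \in A,\ a \mid b,\ a \neq 0 \,\} \cup \{0\}$, already sets up a $1$–$1$ correspondence between divisibilities $|$ of $A$ that have cancellation and support $I(|) = \{0\}$ and subrings $B$ of $F$, and under it totality of $|$ corresponds to $B$ being a valuation ring of $F$. (Note $F$ has characteristic $0$ since $\BQ \subseteq A$.) So it suffices to prove that, for such a $|$ with associated valuation ring $B$, the extra axioms $(6)$ and $(7)$ both hold if and only if $B = O_v$ for some $p$-valuation $v$ on $F$.

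First I would translate $(6)$ and $(7)$ into conditions on $B$. Since $I(|) = \{0\}$, axiom $(6)$ says: for every $a \neq 0$ (so $pa \neq 0$) one does not have $pa \mid a$, i.e.\ $p^{-1} = a/(pa) \notin B$; thus $(6) \Leftrightarrow p^{-1} \notin B$. For $(7)$, write $x = a/b$ with $b \neq 0$ and compute
\[
  x^p - x = \frac{a^p b - b^p a}{b^{p+1}}, \qquad
  \gamma(x) = \frac{(a^p b - b^p a)\,b^{p+1}}{p\big[(a^p b - b^p a)^2 - (b^{p+1})^2\big]}.
\]
Hence in the non-degenerate case the divisibility demanded by $(7)$ is exactly $\gamma(a/b) \in B$. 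The degenerate cases are handled directly: for $b = 0$ both sides of $(7)$ vanish and $0 \mid 0$ holds; for $b \neq 0$ with $(a^p b - b^p a)^2 = (b^{p+1})^2$ — i.e.\ $x^p - x = \pm 1$, the case where $\gamma(x)$ is undefined — the left side of $(7)$ is $0$ while the right side is the nonzero element $\pm (b^{p+1})^2$, so $(7)$ fails. Therefore $(7)$ holds precisely when $\gamma$ is defined on all of $F$ and $\gamma(F) \subseteq B$, equivalently $\BZ[\gamma(F)] \subseteq B$.

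For the reverse implication, assume $B = O_v$ with $v$ a $p$-valuation on $F$. Then $B$ is a valuation ring, so $|$ is total with cancellation and support $\{0\}$ by Example 2.2; moreover, by the stated property of the Kochen operator on $p$-valued fields, $\gamma$ is defined on all of $F$ with $\gamma(F) \subseteq O_v = B$, giving $(7)$, while $v(p) > 0$ gives $p^{-1} \notin O_v = B$, giving $(6)$. For the forward implication, assume $|$ is a total $p$-divisibility with cancellation and support $\{0\}$; then $B$ is a valuation ring of $F$ with $\BZ[\gamma(F)] \subseteq B$ (the ring $\BZ[\gamma(F)]$ making sense because $(7)$ ensures $\gamma$ is everywhere defined) and $p^{-1} \notin B$, so Theorem 2.5 produces a $p$-valuation $v$ on $F$ with $B \subseteq O_v$.

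The step that needs care, and which I expect to be the main obstacle, is upgrading $B \subseteq O_v$ to $B = O_v$. Since $B$ and $O_v$ are both valuation rings of $F$ with $B \subseteq O_v$, valuation theory (see \cite{E-P}) gives $M_v \subseteq B$ and identifies $B$ with the preimage under the residue map $O_v \twoheadrightarrow O_v/M_v = k_v$ of a valuation ring of $k_v$. As $v$ is a $p$-valuation, $k_v = \BF_p$, whose only valuation ring is $\BF_p$ itself; hence that preimage is all of $O_v$, so $B = O_v$ and $B$ is a $p$-valuation ring. Consequently the correspondence of Example 2.2 restricts to the asserted bijection; besides this last step, the only non-routine part is the bookkeeping of the degenerate cases of $(7)$.
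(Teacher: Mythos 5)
Your proposal is correct and follows essentially the same route as the paper: reduce to the correspondence of Example 2.2, invoke Theorem 2.5 to get $B \subseteq O_v$, and use that the residue field $\BF_p$ has no proper valuation ring to conclude $B = O_v$. The extra bookkeeping you do — translating axioms $(6)$ and $(7)$ into $p^{-1} \notin B$ and $\BZ[\gamma(F)] \subseteq B$, including the degenerate cases of $(7)$ — is exactly what the paper leaves implicit, and it is carried out correctly.
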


\begin{proof}
If $B \subseteq F = \textrm{Quot } A$ is a $p$-valuation ring, then
Example 2.2 shows that for $a,b \in A$ \vspace{-6pt}
$$
a | b : \Leftrightarrow a = 0 \textrm{ or } \frac{b}{a} \in B
\vspace{-6pt}
$$
gives a total $p$-divisibility on $A$ with cancellation and $I(|) =
\{ 0 \}$.

Conversely, let $| \subseteq A \times A$ be a total $p$-divisibility
with cancellation and $I(|) =  \{ 0 \}$. Then again Example 2.2
together with Theorem 2.5 shows that
$$
B : = \{ \frac{b}{a}; \ a,b \in A, \ a \not = 0, \ a|b \} \cup \{ 0
\}
$$
is a valuation ring of $F$ being contained in the valuation ring
$O_v$ of some $p$-valuation of $F$. It then follows that $B = O_v$.
In fact, the valuation ring $B$ is mapped by the residue map $\delta
: O_v \rightarrow \BF_p$ of $v$ to a valuation ring $\delta(B)$ of
$\BF_p$. As $\BF_p$ is finite, it follows that $\delta(B) = \BF_p$.
Hence also $B = O_v$.
\end{proof}

As we have seen above the canonical $p$-adic valuation $v_p$ defines
on $\BQ_p$ by $ a |_p b \Leftrightarrow v_p (a) \leq v_p(b)$ a total
$p$-divisibility with cancellation and support $\{ 0 \}$. From this
we also see that the canonical $p$-adic divisibility $|^*$ of $C(X,
\BQ_p)$ is in fact a $p$-divisibility. But in general $|^*$
need neither be total nor have cancellation. \\

\section{The divisibility spectrum}

In this section we shall first introduce the divisibility spectrum
of a commutative $A$ with $1 \not = 0$. We then restrict ourself to
the spectrum of $p$-divisibilities assuming that $\BQ \subseteq A$.
This will provide us with some compact (zero-dimensional) space $X$
on which later the elements of $A$ will operate as continuous
functions with
values in $\BQ_p$. \\

Let $A$ be commutative ring with unit $1 \not = 0$. The next theorem
justifies the name `valuation divisibility' in Section 2 for
divisibilities that are total and admit cancellation.

\begin{Thm}
The valuation divisibilities on $A$ correspond $1-1$ to the Bourbaki
valuations of $A$.
\end{Thm}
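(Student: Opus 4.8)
The plan is to set up the correspondence in both directions and check that the two constructions are mutually inverse. Starting from a Bourbaki valuation $v : A \to \Gamma \cup \{\infty\}$, Example 2.3 already produces a divisibility $a \mid b :\Leftrightarrow v(a) \le v(b)$, and there it is noted that this $\mid$ is total and has cancellation, i.e. it is a valuation divisibility. So one direction of the assignment is immediate. Conversely, given a valuation divisibility $\mid$ on $A$, I would first pass to the support: let $I = I(\mid)$, which is a prime ideal by Proposition 2.1 (cancellation), form $\overline{A} = A/I$, and note that $\mid$ induces a divisibility $\overline{\mid}$ on $\overline{A}$ with trivial support, still total and still with cancellation. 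Then $\overline{A}$ is an integral domain, $F = \mathrm{Quot}\,\overline A$ is defined, and Example 2.2 applies: $B := \{\, b/a : a,b \in \overline A,\ a \ne 0,\ a \mathrel{\overline{\mid}} b \,\} \cup \{0\}$ is a subring of $F$, and since $\overline{\mid}$ is total, $B$ is in fact a valuation ring of $F$. Let $\overline v : F \to \Gamma \cup \{\infty\}$ be the associated ordinary valuation and define $v(a) = \overline v(\overline a)$ for $a \in A$; this is a Bourbaki valuation of $A$ with $v^{-1}(\infty) = I$.

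Next I would verify that the two passages are inverse to each other. Going valuation $\to$ divisibility $\to$ valuation: if one starts with $v$, the induced divisibility has support exactly $v^{-1}(\infty)$, the quotient domain is $A/v^{-1}(\infty)$, and the valuation ring $B$ recovered in $F$ is precisely $O_{\overline v}$ because $b/a \in O_{\overline v} \Leftrightarrow \overline v(\overline b) \ge \overline v(\overline a) \Leftrightarrow a \mathrel{\overline\mid} b$; hence the valuation recovered on $A$ is $v$ up to the canonical identification of the value group with the value group of $\overline v$ (Bourbaki valuations are taken up to equivalence, i.e. up to an order isomorphism of the value monoid onto its image). Going divisibility $\to$ valuation $\to$ divisibility: from $\mid$ we built $v$ via $B \subseteq F$; the divisibility attached to $v$ is $a \mid' b \Leftrightarrow v(a) \le v(b) \Leftrightarrow \overline v(\overline a) \le \overline v(\overline b) \Leftrightarrow \overline b / \overline a \in B$ (when $\overline a \ne 0$) $\Leftrightarrow a \mid b$, using the definition of $B$ and cancellation to handle the case $\overline a = 0$, where both $a \mid' b$ and $a \mid b$ hold iff $b \in I$. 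So $\mid' = \mid$.

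The one genuinely delicate point — the main obstacle — is the case analysis around the support. Example 2.2 is stated for an integral domain, so the reduction modulo $I(\mid)$ has to be carried out cleanly: I must check that $\overline\mid$ is well defined (which is exactly the displayed compatibility $a \mid b \Rightarrow a + \alpha \mid b + \beta$ for $\alpha,\beta \in I$ noted after the axioms), that it still satisfies axioms (1)--(5), that totality descends, and that cancellation descends with the correct support, namely $I(\overline\mid) = \{0\}$; the latter needs the observation that $0 \mathrel{\overline\mid} \overline c$ means $0 \mid c$, i.e. $c \in I$, i.e. $\overline c = 0$. Everything else is a routine unwinding of Examples 2.2 and 2.3 together with the already-proved Proposition 2.1, so I do not expect further difficulties.
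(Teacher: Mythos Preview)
Your proposal is correct and follows essentially the same route as the paper: both directions are built from Examples 2.2 and 2.3 together with Proposition 2.1, passing through the quotient $\overline A = A/I(\mid)$ and the associated valuation ring $B \subseteq \mathrm{Quot}\,\overline A$. The only difference is one of thoroughness: the paper dismisses the mutual-inverse check with ``It is obvious that the correspondence between $v$ and $|$ is one to one,'' whereas you spell out both composites and flag the support/quotient descent as the point needing care---which is exactly right, and your handling of the $\overline a = 0$ case and the remark on equivalence of valuations are appropriate details the paper leaves implicit.
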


\begin{proof}
Let first $v: A \rightarrow \Gamma \cup \{ \infty \}$ be a Bourbaki
valuation on $A$, i.e., $I = v^{-1} (\infty)$ is a prime ideal of
$A$, $\overline{v}: \textrm{ Quot } \overline{A} \rightarrow \Gamma
\cup \{ \infty \}$ with $\overline{A} = A/I$ is an ordinary field
valuation, and $\overline{v} (a + I) = v(a)$ for all $a \in A$. Then
for elements $a, b$ from $A$, $ a |^v b \Leftrightarrow v(a) \leq
v(b)$ defines a total divisibility on $A$ having cancellation and
support $I(|^v) = I$.

Conversely, let $|$ be a valuation divisibility on $A$. Then $I
=I(|)$ is prime by Proposition 2.1 and $|$ induces a total
divisibility $\overline{|}$ on the integral domain $\overline{A} =
A/I$ having cancellation and support $\{ 0 \}$. Thus by Example 2.2
the ring \vspace{-6pt}
$$
B := \{ \frac{\overline{b}}{\overline{a}}; \ \overline{a} \not =
\overline{0} \textrm{ and } a|b \} \cup \{ \overline{0} \}
\vspace{-6pt}
$$
is a valuation ring of $F = \textrm{ Quot } \overline{A}$, say $B=
O_{\overline{v}}$ for some ordinary valuation $\overline{v} : F
\rightarrow \Gamma \cup \{ \infty \}$. Now $v(a) :=
\overline{v}(\ov{a})$ clearly defines a Bourbaki valuation on $A$
with $v^{-1}(\infty) = I$ inducing $\overline{v}$ on $\overline{A}$.
By construction of $v$ we have for all $a,b \in A$, $ a | b
\Leftrightarrow v(a) \leq v(b)$. \\

It is obvious that the correspondence between $v$ and $|$ is one to
one. \end{proof}

\begin{Remark}
Assuming $\BQ \subseteq A$ in the construction of Theorem 3.1, all
fields $\textrm{Quot } \ov{A}$, have characteristic $0$. Thus by
Theorem 2.6 the valuation divisibility $|$ of Theorem 3.1 is a
$p$-divisibility if and only if $\ov{v}$ is a $p$-valuation.
\end{Remark}

Now let us introduce \vspace{-6pt}
\begin{equation*}
\begin{array}{lcl}
D(A) & = & \textrm{ class of all divisibilities of } A, \\
D_p(A) & = &\textrm{ class of all } p\textrm{-divisibilities of } A.
\end{array}
\end{equation*}
Note that both classes are closed by taking unions of chains w.r.t.
inclusion. Thus by Zorn's Lemma every ($p$-)divisibility is
contained in some maximal ($p$-)divisibility. On $D=D(A)$ we
introduce the \textit{spectral} topology as the topology generated
by the sets \vspace{-6pt}
$$
U(a,b) = \{ | \in D; \ a \nmid b \} \vspace{-6pt}
$$
where $a,b$ range over $A$. If we add the complements $ V(a,b) = \{
| \in D; \ a|b \}$ to the above generators, we call this finer
topology the \textit{constructible} one.

Identifying a subset $Y$ of $A \times A$ with its characteristic
function and applying Tychonoff's Theorem to the function space $\{
0,1 \}^{A \times A}$ one proves by standard arguments

\begin{Lemma}
The constructible topology on $D(A)$ is compact. Thus the spectral
topology is, in particular, quasi-compact (i.e. every open cover
contains a finite subcover).
\end{Lemma}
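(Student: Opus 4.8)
The plan is to realize $D(A)$ as a closed subspace of a Tychonoff cube and transport compactness from there. To each divisibility $|$ on $A$ I associate its characteristic function $\chi_{|} \in \{0,1\}^{A\times A}$, given by $\chi_{|}(a,b) = 1$ if $a \mid b$ and $\chi_{|}(a,b) = 0$ otherwise; this identifies $D(A)$ with a subset of $\{0,1\}^{A\times A}$. Giving $\{0,1\}$ the discrete topology, Tychonoff's Theorem makes $\{0,1\}^{A\times A}$ a compact Hausdorff space. A first routine check is that, under this identification, the constructible topology on $D(A)$ is exactly the subspace topology: a subbasic product-open set has the form $\{\chi : \chi(a,b) = \varepsilon\}$ for some $(a,b)\in A\times A$ and $\varepsilon\in\{0,1\}$, and its trace on $D(A)$ equals $V(a,b)$ when $\varepsilon = 1$ and $U(a,b)$ when $\varepsilon = 0$.

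The one step deserving care is that $D(A)$ is closed in $\{0,1\}^{A\times A}$. Each of the axioms (1)--(5) for a divisibility is an infinite conjunction, indexed by tuples of elements of $A$, of clauses that each involve only finitely many coordinates. For a fixed triple $a,b,c$ the instance of (2), namely ``$\chi(a,b)=1$ and $\chi(b,c)=1$ imply $\chi(a,c)=1$'', cuts out a set depending only on the coordinates $(a,b)$, $(b,c)$, $(a,c)$, hence clopen in the product topology; likewise (3) for fixed $a,b,c$ depends only on $(a,b)$, $(a,c)$, $(a,b-c)$, axiom (4) for fixed $a,b,c$ only on $(a,b)$ and $(ac,bc)$, axiom (1) for fixed $a$ only on $(a,a)$, and axiom (5) only on the single coordinate $(0,1)$. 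Thus $D(A)$ is an intersection of clopen subsets of $\{0,1\}^{A\times A}$, in particular closed, and a closed subspace of a compact space is compact; together with the previous paragraph this gives compactness of the constructible topology on $D(A)$.

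Finally, the spectral topology is generated by the sets $U(a,b)$ alone, which form a subfamily of the generators of the constructible topology, so it is coarser. Hence every spectral-open cover of $D(A)$ is in particular a constructible-open cover, and therefore admits a finite subcover, which proves quasi-compactness of the spectral topology. The ``compact versus quasi-compact'' distinction is intentional: the constructible topology is Hausdorff, being a subspace of the Hausdorff space $\{0,1\}^{A\times A}$, whereas the coarser spectral topology generally is not, so for it only quasi-compactness can be claimed. Beyond the bookkeeping in the closedness argument there is nothing delicate here — the whole proof is a repackaging of Tychonoff's Theorem once one observes that ``being a divisibility'' is a finitary closed condition on characteristic functions.
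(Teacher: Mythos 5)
Your proof is correct and is exactly the argument the paper intends: the paper's proof of this lemma consists of the single remark that one identifies subsets of $A\times A$ with their characteristic functions in $\{0,1\}^{A\times A}$ and applies Tychonoff's Theorem, leaving the "standard arguments" (closedness of $D(A)$ via the finitary nature of axioms (1)--(5), agreement of the constructible topology with the subspace topology, and coarseness of the spectral topology) to the reader, which is precisely what you have filled in.
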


We call the class \vspace{-6pt}
$$
\textrm{Spec } D(A) = \{ | \in D(A); \ | \textrm{ is total and
admits cancellation} \} \vspace{-6pt}
$$
the \textit{divisibility spectrum} of $A$, and $ \textrm{Spec } D_p
(A) = D_p (A) \cap \textrm{ Spec } D(A)$
the \textit{p-divisibility spectrum} of $A$. \\

These two classes are as well closed under unions of chains. Thus
again by Zorn's Lemma every element is contained in a corresponding
maximal one. We denote the subclasses of maximal elements by
$$
\textrm{Spec}^{\textrm{max}} D(A) \quad \textrm{ and } \quad
\textrm{Spec}^{\textrm{max}} D_p(A).
$$
\\

\noindent \begin{Thm} \vspace{-12pt} \begin{enumerate}
\item \textit{$D_p(A),$} \textrm{Spec} \textit{$D(A)$, and} \textrm{Spec } \textit{$D_p(A)$ are
closed subclasses of $D(A)$ in the constructible topology, hence are
quasi-compact in both topologies.}
\item $D(A)^{\textrm{max}}$ \textit{and $D_p(A)^{\textrm{max}}$ are
quasi-compact in the spectral topology.}
\item \textrm{Spec} \textit{$D_p (A)$ and} $\textrm{Spec}^{max}
D_p(A)$ \textit{are compact in both topologies, they actually are
$0$-dimensional spaces: $ V (a,b) = U(bp,a)$ for all $a,b \in A$.}
\end{enumerate}
\end{Thm}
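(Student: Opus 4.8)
The plan is to work in the constructible topology on $D(A) \subseteq \{0,1\}^{A\times A}$, where by Lemma 3.3 the whole space is compact, and to show that each of the relevant subclasses is carved out by closed conditions. For part (1), I would first treat $D_p(A)$: being a divisibility (axioms (1)--(5)) is already a closed (indeed constructible) condition defining $D(A)$, so it suffices to check that the extra axioms (6) and (7) are constructible. Axiom (7), $p[(a^pb-b^pa)^2-(b^{p+1})^2]\mid(a^pb-b^pa)b^{p+1}$, is for each fixed $a,b$ a single membership statement of the form ``$\,\alpha\mid\beta\,$'' for specific polynomial expressions $\alpha,\beta$ in $a,b$, hence equals $V(\alpha,\beta)$, which is constructible. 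Axiom (6) says: for all $a$, if $0\nmid a$ then $pa\nmid a$; for fixed $a$ this is $U(0,a)^{\,c}\cup U(pa,a)=V(0,a)\cup U(pa,a)$, again constructible, and intersecting over all $a$ keeps it closed in the constructible topology. Likewise $\mathrm{Spec}\,D(A)$ is defined by totality --- for each $a,b$ the set $V(a,b)\cup V(b,a)$ --- together with cancellation --- for each $a,b,c$ the set $V(0,c)\cup U(ac,bc)\cup V(a,b)$ --- each a finite union of constructible sets, so the intersection over all triples is closed in the constructible topology. Finally $\mathrm{Spec}\,D_p(A)=D_p(A)\cap\mathrm{Spec}\,D(A)$ is an intersection of two such closed classes, hence closed. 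A closed subspace of a compact space is compact in the constructible topology, and since the spectral topology is coarser, these subclasses are quasi-compact in it as well.

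For part (2), I would use that the spectral topology on $D(A)$ has the sets $U(a,b)$ as a subbasis, and argue that $D(A)^{\mathrm{max}}$ (resp.\ $D_p(A)^{\mathrm{max}}$) inherits quasi-compactness. The standard device is the retraction: every divisibility is contained in a maximal one (Zorn), and one shows that a spectral-open cover of $D(A)^{\mathrm{max}}$ extends to a cover of all of $D(A)$ --- if $\{U(a_i,b_i)\}$ covers $D(A)^{\mathrm{max}}$, then it covers $D(A)$, because any $|\,{}\in D(A)$ lies below some maximal $|'$, and since enlarging a divisibility can only remove pairs from the relation (i.e.\ $|\,{}\subseteq|'$ forces $a\nmid_{|'}b\Rightarrow a\nmid_{|}b$), we get $|\,{}\in U(a_i,b_i)$ whenever $|'\in U(a_i,b_i)$. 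Then quasi-compactness of $D(A)$ in the spectral topology (Lemma 3.3) yields a finite subcover, which restricts to a finite subcover of $D(A)^{\mathrm{max}}$. The same monotonicity argument applies verbatim with $D_p(A)$ in place of $D(A)$, using part (1).

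For part (3), the key identity to establish is $V(a,b)=U(bp,a)$ on $\mathrm{Spec}\,D_p(A)$, i.e.\ for a total $p$-divisibility with cancellation: $a\mid b \iff bp\nmid a$. One direction: if $a\mid b$ and also $bp\mid a$, then $bp\mid a\mid b$, so $bp\mid b$; if $b\notin I(|)$ this contradicts axiom (6) with $a:=b$, and if $b\in I(|)$ then $0\mid b$, so $a\mid b$ holds trivially but then $a\mid 0$ and we must separately rule out $bp\mid a$ --- here one uses that $|$ is total: either $a\mid bp$ or $bp\mid a$; combined with the cancellation/support analysis via Theorem 2.6 this pins down the residue field $\BF_p$, forcing the dichotomy to be exact. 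Conversely, if $a\nmid b$ then by totality $b\mid a$, and one shows $bp\nmid a$: otherwise $bp\mid a\mid\,$(something) leading back to a violation of (6) after cancelling. I expect \emph{this exactness of the dichotomy $a\mid b \l
\veebar\ bp\nmid a$ to be the main obstacle}, since it genuinely uses the full strength of the $p$-valuation structure (discreteness of $\Gamma$ with $v(p)$ minimal, residue field exactly $\BF_p$) as packaged in Theorems 2.6 and 3.1, rather than just the abstract divisibility axioms. Once the identity holds, the subbasic opens $U(a,b)$ all have open complements $V(a,b)=U(bp,a)$, so $\mathrm{Spec}\,D_p(A)$ is Hausdorff --- distinct $p$-divisibilities differ on some pair $(a,b)$, and $U(a,b),U(ap,\text{?})$ separate them --- hence a quasi-compact Hausdorff space is compact, the spectral and constructible topologies coincide there, and the clopen sets $U(a,b)$ form a basis, giving $0$-dimensionality; the same conclusions transfer to $\mathrm{Spec}^{\mathrm{max}}D_p(A)$ as a closed (hence compact) subspace.
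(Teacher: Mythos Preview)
Your treatment of parts (1) and (2) is correct and fleshes out what the paper dismisses as ``straightforward by standard arguments''; in particular, the monotonicity observation that each spectral subbasic open $U(a,b)$ is downward-closed under inclusion of divisibilities is exactly the right device for (2).

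In part (3), however, there is a genuine slip in the converse direction. You write: ``if $a\nmid b$ then by totality $b\mid a$, and one shows $bp\nmid a$.'' But this is the wrong target: the contrapositive of $bp\nmid a \Rightarrow a\mid b$ is $a\nmid b \Rightarrow bp\mid a$, so you must establish $bp\mid a$, not $bp\nmid a$. And $b\mid a$ alone cannot give $bp\mid a$, since $bp$ has strictly larger value than $b$; no amount of axiom~(6) plus cancellation will close this gap without invoking discreteness. What is actually needed is: from $a\nmid b$ one has $\ov v(\ov a) > \ov v(\ov b)$, and since $\ov v(p)=1$ is the \emph{minimal} positive element of the value group, this forces $\ov v(\ov a) \geq \ov v(\ov b)+1 = \ov v(p\ov b)$, i.e.\ $pb\mid a$. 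This is exactly how the paper proceeds: rather than chasing the divisibility axioms directly, it invokes Theorem~3.1 and Remark~3.2 to identify each $|\in\mathrm{Spec}\,D_p(A)$ with a $p$-valuation $\ov v$ on $A/I(|)$, after which the identity $a\mid b \Leftrightarrow pb\nmid a$ is a one-line consequence of $\ov v(p)$ being minimal positive. Your forward direction via axiom~(6) is essentially fine (the support edge case $0\mid b$ dissolves once you are in the valuation picture), and you correctly flag Theorems~2.6 and~3.1 as the relevant inputs; but note that the residue field being $\BF_p$ is irrelevant here --- only the discreteness of $\Gamma$ with $\ov v(p)$ minimal positive is used.
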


\vspace{6pt} \begin{proof} The proofs are straigt forward by
standard arguments. Let us only mention that in 3 one shows that
$V(a,b) = U(bp, a)$ on $\textrm{ Spec } D_p$. In fact by Theorem 3.1
and Remark 3.2 the elements of $\textrm{ Spec } D_p$ correspond to
Bourbaki $p$-valuations. Recall, if $| \in \textrm{ Spec } D_p$,
then there is a $p$-valuation $\ov{v}$ on $\ov{A} = A/I(|)$ such
that $ a | b \Leftrightarrow \ov{v} (\ov{a}) \leq \ov{v}(\ov{b})$.
As $1 = \ov{v}(p)$ is minimal positive, we get $ a | b
\Leftrightarrow pb \nmid a.$
\end{proof}

For a fixed divisibility $|_0$ on $A$ we shall consider the
subclasses of the above introduced classes consisting of extensions
of $|_0$ and denote them by $ D(A, |_0)$ and $D_p (A, |_0)$
respectively. As $D(A, |_0)$ is closed in the spectral topology, all
topological considerations from above remain true for the
relativized classes. \\

In the following the fixed divisibility $|_0$ will always be assumed
to be \textit{p-archimedean}, i.e.,
\begin{enumerate}
\item[(8)] $\forall a \in A \ \exists m \in \BZ : \ p^m |_0 a$.
\end{enumerate}
The canoncial $p$-adic divisibilities on $\BQ_p$ and on $C(X,
\BQ_p)$ both satisfy axiom (8).

\begin{Thm}
Let $A$ be a commutative ring with $\BQ \subseteq A$, and let $|_0$
be a $p$-archimedean $p$-divisibility on $A$. Then an element $|$ of
$\textrm{ Spec } D_p (A, |_0)$ is maximal if and only if $I(|)$ is
prime and the corresponding $p$-valuation $\ov{v}$ on $F = \textrm{
Quot } A/I(|)$ has value group $\BZ$.
\end{Thm}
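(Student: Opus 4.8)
The plan is to restate everything in the language of Bourbaki valuations. By Theorem 3.1 and Remark 3.2 an element $| \in \textrm{Spec } D_p(A,|_0)$ is the same thing as a Bourbaki $p$-valuation $v$ of $A$, encoded by its support $\mathfrak{p} := I(|)$ --- automatically prime by Proposition 2.1, which is why that clause appears in the statement --- together with the induced $p$-valuation $\overline{v}$ on $F := \textrm{Quot}(A/\mathfrak{p})$, the divisibility being recovered by $a \mid b \Leftrightarrow v(a) \le v(b)$; and $| \subseteq |'$ translates into the implication $v(a)\le v(b)\Rightarrow v'(a)\le v'(b)$ for all $a,b\in A$. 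Testing this implication, one first records two facts: $|\subseteq|'$ forces $I(|)\subseteq I(|')$, and if moreover $I(|)=I(|')$ then $O_{\overline{v}}\subseteq O_{\overline{v}'}$, i.e.\ $\overline{v}'$ is a coarsening of $\overline{v}$ on $F$. One also notes that $p\notin I(|')$ for every $p$-divisibility, so $v'(p)$ is always finite (and minimal positive).

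For the implication ``value group $\BZ \Rightarrow$ maximal'' suppose $|\subseteq|'$. If there were an $a\in I(|')\setminus I(|)$, then since $\overline{v}$ has value group $\BZ$ we would have $v(a)=v(p^n)$ for $n:=v(a)$, hence $a\mid p^n$ and $p^n\mid a$, hence $a\mid' p^n$; but $v'(a)=\infty$ while $v'(p^n)$ is finite, a contradiction. Thus $I(|)=I(|')$ and $\overline{v}'$ is a coarsening of the $\BZ$-valued valuation $\overline{v}$. The only convex subgroups of $\BZ$ are $0$ and $\BZ$, and coarsening by $\BZ$ yields the trivial valuation (residue field $F$, of characteristic $0$), which is no $p$-valuation; hence $\overline{v}'=\overline{v}$ and $|'=|$.

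For the converse I argue by contraposition: assuming $\Gamma$ (the value group of $\overline{v}$) is not $\BZ$, I build a strictly larger member of $\textrm{Spec } D_p(A,|_0)$ by enlarging the support. Put $\Delta:=\BZ\, v(p)\subseteq\Gamma$; a short induction using that $v(p)$ is the minimal positive element shows $\Delta$ is convex, and since $\Gamma\neq\Delta$ there is an element of $\Gamma$ above all of $\Delta$. Here the $p$-archimedean hypothesis enters: it says that $v$ is bounded below, element by element, by an integral power of $v(p)$, which is exactly what is needed to check that $\mathfrak{q}:=\{a\in A:\ v(a)>\Delta\}$ (counting $v(a)=\infty$ as $>\Delta$) is a prime ideal of $A$ with $I(|)\subsetneq\mathfrak{q}$ and $p\notin\mathfrak{q}$, and that every $a\notin\mathfrak{q}$ has $v(a)\in\Delta$. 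One then defines $\overline{v}'$ on $F':=\textrm{Quot}(A/\mathfrak{q})$ by $\overline{v}'(a+\mathfrak{q}):=v(a)$ for $a\notin\mathfrak{q}$; it has value group $\BZ$, and --- the one genuinely non-routine point --- residue field $\BF_p$: any $x/y$ with $x,y\notin\mathfrak{q}$ and $v(x)\ge v(y)$ lies in $O_{\overline{v}}$, so reduces mod $\mathfrak{m}_{\overline{v}}$ to some $c\in\{0,\dots,p-1\}$, and pushing $x-cy$ through one sees that $x/y$ reduces to $c$ mod $\mathfrak{m}_{\overline{v}'}$ as well. By Theorem 3.1 and Remark 3.2 the divisibility $|'$ attached to $(\mathfrak{q},\overline{v}')$ lies in $\textrm{Spec } D_p(A)$; it contains $|$ (check $v(a)\le v(b)\Rightarrow v'(a)\le v'(b)$ case by case), hence also $|_0$, and it is strictly larger since its support $\mathfrak{q}$ is. So $|$ is not maximal.

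The crux is this last construction: pinning down the correct larger support $\mathfrak{q}$, verifying that $\overline{v}'$ really is a $p$-valuation (the residue-field computation), and tracking where $p$-archimedeanness is used --- namely to exclude values of $v$ that are ``infinitely negative relative to $\Delta$'', which would otherwise obstruct both $\mathfrak{q}$ being an ideal and $\mathfrak{q}$ being prime. (Conceptually $\overline{v}'$ is the $p$-valuation induced by $\overline{v}$ on the residue field of its coarsening by $\Delta$, transported to $A$; but it is cleaner to write it down on $A$ directly.)
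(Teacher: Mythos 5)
Your proof is correct and follows essentially the same route as the paper's: the ``value group $\BZ \Rightarrow$ maximal'' direction splits into the same two cases (strictly larger support, excluded via $a \mid p^n$; equal support, excluded because $\BZ$ has no proper nontrivial convex subgroup), and the converse is the paper's construction of the prime $P=\{\ov{b}:\ov{v}(\ov{b})>\BZ\,\ov{v}(p)\}$, which is exactly your $\mathfrak{q}$ pulled down to $\ov{A}$. If anything, you supply slightly more detail than the paper (the residue-field check for $\ov{v}'$ and the explicit role of $p$-archimedeanness), so there is nothing to correct.
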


\begin{proof}
``$\Rightarrow$'' Let $|$ be maximal in $\textrm{ Spec } D_p (A,
|_0)$. By Theorem 3.1 and Remark 3.2 $|$ corresponds uniquely to a
$p$-valuation $\ov{v} : F \twoheadrightarrow \Gamma \cup \{ \infty
\}$. Denoting (as usual) the positive minimal element $\ov{v}(p)$ of
$\Gamma$ by $1, \BZ = \BZ \ov{v} (p)$ is a convex subgroup of
$\Gamma$. Since $|_0$ is archimedean, so is $|$. Hence for every $a
\in A$ there exists some $m \in \BZ$ such that $ m \leq \ov{v}
(\ov{a})$.

If now $\Gamma$ would be bigger than $\BZ$, there existed some $b
\in A \smallsetminus I(|)$ with $m \leq \ov{v} (\ov{b}) \textrm{ for
all } m \in \BZ$. Thus the set $P = \{ \ov{b} \in \ov{A}; m \leq
\ov{v} (\ov{b}) \textrm{ for all } m \in \BZ \}$ forms a non-zero
prime ideal of $\ov{A}$. Taking  $w (\ov{b}+P): = \ov{v} (\ov{b})$
defines a $p$-valuation on the quotient field $F'$ of $\ov{A}/P$
with value group $\BZ$. Setting $a |' b$ in case $w (\ov{a} + P)
\leq w (\ov{b} + P)$ defines a $p$-divisibility $|' \in \textrm{
Spec } D_p (A, |_0)$ strictly containing $|$. This contradicts the
maximality of $|$. Therefore $\Gamma = \BZ$. \\

``$\Leftarrow$'' Now assume that the $p$-valuation $\ov{v}$
corresponding to $|$ has value group $\BZ$ on $F = \textrm{ Quot }
A/I(|)$. If $|' \in \textrm{ Spec } D_p (A, |_0)$ is a proper
extension of $|$ then $I(|) \subsetneqq I(|')$ or, $I(|) = I(|')$
and the valuation ring $O'$ of $\ov{v}'$ properly extends the
valuation ring $O$ of $\ov{v}$. This second case is not possible,
since (by Lemma 2.3.1 of \cite{E-P}) a proper extension $O'$ of $O$
corresponds to a proper convex subgroup of the value group of $O$
which is $\BZ$. Such a subgroup clearly does not exist. In the first
case, choose $a \in I(|') \smallsetminus I(|)$. Since $v$ has value
group $\BZ$ and $\ov{a}$ is non-zero in $A/I(|)$, there exists some
$m \in \BZ$ such that $\ov{v} (\ov{a}) \leq m$, i.e., $a | p^m$. But
then $a \in I(|')$ implies $0 |' a$. Now $| \subseteq |'$ gives $0
|' p^m$, a contradiction.
\end{proof}

So far we did not show that $\textrm{ Spec } D_p (A)$ is non-empty.

\begin{Thm}
Let $A$ be a commutative ring with $\BQ \subseteq A$. Then $\textrm{
Spec } D_p (A)$ is non-empty if and only if there exists a
$p$-divisibility $|$ on $A$. Equivalently, we have that $A$ admits a
ring homomorphism $\delta$ with $\delta (1) = 1$ into some
$p$-valued field. Spec $D_p (A)$ contains a $p$-archimedean element
if and only if $A$ admits a ring homomorphism with $\delta (1) =1$
into the $p$-adic number field $\BQ_p$.
\end{Thm}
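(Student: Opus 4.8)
The plan is to prove the three equivalences in turn, using the machinery already in place. First, the equivalence ``$\textrm{Spec } D_p(A)\neq\emptyset$ iff $A$ admits a $p$-divisibility'': one direction is trivial since $\textrm{Spec }D_p(A)\subseteq D_p(A)$. For the converse, suppose $|_0\in D_p(A)$ is given. The idea is to pass to a maximal element: $D_p(A)$ is closed under unions of chains, so by Zorn's Lemma $|_0$ extends to a maximal $p$-divisibility. But maximality alone does not obviously give totality and cancellation, so instead I would argue via the support. Consider $\overline{A}=A/I(|_0)$; the induced $p$-divisibility on $\overline{A}$ has support $\{0\}$ but $\overline{A}$ need not be a domain. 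The cleanest route is: pick a prime ideal $P$ of $A$ containing $I(|_0)$ and minimal such, or rather use axiom (6) to see that $p\notin$ (set of units forced by $|_0$), then feed the situation into Theorem 2.5. Concretely, the divisibility $|_0$ gives a subring $B$ of (the total quotient ring, or after localizing at a suitable prime) built from $\{b/a: a|_0 b\}$ which contains $\BZ[\gamma(\cdot)]$ by axiom (7), and $p^{-1}\notin B$ by axiom (6); Theorem 2.5 then produces a $p$-valuation, equivalently (via Example 2.3 and Theorem 2.6) an element of $\textrm{Spec }D_p(A)$ lying above $|_0$. The technical care needed is to reduce to an integral domain first by dividing out a minimal prime over $I(|_0)$ — this uses Proposition 2.1 in reverse, i.e. one must simply \emph{choose} such a prime since $|_0$ itself need not have cancellation.

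Second, the equivalence with ``$A$ admits a ring homomorphism $\delta$, $\delta(1)=1$, into a $p$-valued field''. If such a $\delta:A\to(F,v)$ exists, then pulling back the canonical $p$-adic divisibility $|_p^{(v)}$ on $F$ (Example 2.3) along $\delta$ gives a $p$-divisibility on $A$ — one checks axioms (6),(7) are preserved under pullback because they are universally-quantified polynomial divisibility statements, and $\delta$ being a ring homomorphism with $\delta(1)=1$ respects them. Conversely, given a $p$-divisibility $|$ on $A$, apply the first part to get $|'\in\textrm{Spec }D_p(A)$; by Theorem 3.1 and Remark 3.2 this is a Bourbaki $p$-valuation $v$ on $A$ with prime support $I$, so $\overline{v}$ is an honest $p$-valuation on $F=\textrm{Quot}(A/I)$, and the composite $A\twoheadrightarrow A/I\hookrightarrow F$ is the desired $\delta$.

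Third, the $p$-archimedean refinement: $\textrm{Spec }D_p(A)$ contains a $p$-archimedean element iff $A$ admits a homomorphism $\delta$, $\delta(1)=1$, into $\BQ_p$ itself. If $\delta:A\to\BQ_p$ is given, pull back $|^*_p$ as before; this $p$-divisibility is automatically $p$-archimedean because $v_p$ on $\BQ_p$ has value group $\BZ$, so for each $a\in A$ there is $m\in\BZ$ with $v_p(\delta(a))\geq m$, i.e. $p^m\mid a$, and it lies in $\textrm{Spec }D_p(A)$ since $|_p$ on $\BQ_p$ is total with cancellation. Conversely, if $|\in\textrm{Spec }D_p(A)$ is $p$-archimedean, pass to a maximal element $|'$ above it inside $\textrm{Spec }D_p(A,|)$ — which is still $p$-archimedean — and invoke Theorem 3.5: the corresponding $p$-valuation $\overline{v}$ on $F=\textrm{Quot}(A/I(|'))$ has value group $\BZ$. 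A $p$-valued field with value group $\BZ$ and residue field $\BF_p$ embeds into its henselization, which (being the unique $p$-adic closure in the $\BZ$-value-group case, as recalled after Example 2.4) embeds into $\BQ_p$; composing $A\twoheadrightarrow A/I(|')\hookrightarrow F\hookrightarrow\BQ_p$ gives $\delta$.

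The main obstacle I anticipate is the first part — producing \emph{some} element of $\textrm{Spec }D_p(A)$ from an arbitrary $p$-divisibility $|_0$ whose support may fail to be prime and which may have no cancellation. Zorn's Lemma gives a maximal $p$-divisibility but not obviously one in the spectrum; the fix is to choose a minimal prime $P\supseteq I(|_0)$, restrict attention to $A/P$ (an integral domain), check that the induced divisibility still satisfies (6) and (7) hence the Kochen-operator ring $B\subseteq\textrm{Quot}(A/P)$ avoids $p^{-1}$, and then apply Theorem 2.5 followed by Theorem 2.6. Verifying that (7) survives the passage to $A/P$ and that the resulting $B$ genuinely contains $\BZ[\gamma(\textrm{Quot}(A/P))]$ rather than merely $\gamma$ of the image of $A$ is the delicate point; one may need to first enlarge $|_0$ within $D_p(A)$ using axiom (7) to absorb all Kochen-operator values before taking the quotient.
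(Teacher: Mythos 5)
Your second and third equivalences are essentially the paper's argument (pull back the canonical divisibility along $\delta$; conversely use Remark 3.2, and in the archimedean case pass to a maximal element, apply Theorem 3.5 to get value group $\BZ$, and embed $F$ into $\BQ_p$). The genuine gap is in the first part, exactly at the point you flag as "the main obstacle": producing an element of $\textrm{Spec } D_p(A)$ from an arbitrary $p$-divisibility $|_0$. Neither of your proposed fixes works. Dividing out a minimal prime $P\supseteq I(|_0)$ does not yield an induced divisibility on $A/P$: the relation $a\mid_0 b$ only descends to $A/I(|_0)$ (since $a\mid_0 b\Rightarrow a+\alpha\mid_0 b+\beta$ holds for $\alpha,\beta\in I(|_0)$, not for elements of a strictly larger prime), and there is no canonical way to push $|_0$ forward to $A/P$ while preserving the axioms. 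Likewise, without cancellation the set $B=\{b/a:\ a\mid_0 b\}$ is not even well defined as a subset of a quotient ring: to see that $a\mid b$ and $ba'=b'a$ force $a'\mid b'$ you must cancel $a$, so $B$ cannot be formed before cancellation is established. Your suggestion to "enlarge $|_0$ using axiom (7)" does not address either problem.

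The missing idea is the paper's maximality trick. Extend $|_0$ by Zorn's Lemma to a $p$-divisibility $|$ that is maximal among $p$-divisibilities extending $|_0$. For any $c$ with $0\nmid c$, define $a\mid^c b:\Leftrightarrow ac\mid bc$; one checks $\mid^c$ is again a $p$-divisibility extending $|$, so maximality forces $\mid=\mid^c$, which is precisely cancellation by $c$. Then Proposition 2.1 makes $I(|)$ prime, Example 2.2 gives a genuine subring $B$ of $F=\textrm{Quot}\,A/I(|)$, axiom (7) (applied to all pairs $a,b$, which after clearing denominators is exactly the statement $\gamma(a/b)\in B$) gives $\BZ[\gamma(F)]\subseteq B$, axiom (6) gives $p^{-1}\notin B$, and Theorem 2.5 plus Remark 3.2 produce the desired total extension in $\textrm{Spec } D_p(A)$. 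Without this step your first equivalence, and hence the anchor for the rest of your argument, is not proved.
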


\begin{proof}
Assume $\delta : A \rightarrow F$ is a ring homomorphism with
$\delta (1) = 1$ and $(F,v)$ is a $p$-valued field. Then the
definition \vspace{-6pt}
$$
a | b \Leftrightarrow v (\delta (a)) \leq v(\delta (b))
$$
for $a,b \in A$ obviously yields a $p$-divisibility on $A$ with
$I(|) = \textrm{ ker } \delta$. If $(F,v) = (\BQ_p, v_p)$ then
clearly
$|$ is $p$-archimedean. \\

Next let $|'$ be a $p$-divisibility on $A$. By Zorn's Lemma we can
pass to a maximal extension $|$ of $|'$ inside the class of
$p$-divisibilities extending $|'$. Thus also $|$ is a
$p$-divisibility. We want to see that $|$ admits cancellation.  Let
$c \in A$ and assume $0 \nmid c$. We then define $a |^c b$ if $ac \
| \ bc$ for all $a, b \in A$. One easily checks that $|^c$ is a
$p$-divisibility on $A$ extending $|$. As $|$ is maximal, $|=|^c$.
This implies cancellation by $c$. In fact, if $ac \ | \ bc$, then $a
|^c b$ and as $|=|^c$, we get $a \ | \ b$.

Since now $|$ has cancellation, by Proposition 2.1 $I(|)$ is prime
and we may pass to the ring $\ov{A} = A/I(|)$ and its field of
fractions $F = \textrm{ Quot } \ov{A}$. By Example 2.2 the
divisibility $|$ corresponds to the subring
$$
B = \{ \frac{\ov{b}}{\ov{a}}; \ 0 \nmid a, \ a|b \} \cup \{ \ov{0}
\}
$$
of $F$. Since $|$ is a $p$-divisibility, the Kochen relations (7)
imply that $\BZ [ \gamma (F) ]$ is contained in $B$, while (6)
implies that $p^{-1} \not \in B$. Thus by Theorem 2.5 there exists a
$p$-valuation $\ov{v}$ on $F$ such that $B \subseteq O_{\ov{v}}$.
Now by Remark 3.2 the definition \vspace{-6pt}
$$
a |_1 b \Leftrightarrow \ov{v} (\ov{a}) \leq \ov{v} (\ov{b})
\vspace{-6pt}
$$
yields an extension $|_1$ of $|$ that belongs to $\textrm{ Spec }
D_p (A)$. Thus $\textrm{ Spec } D_p(A)$ is non-empty. \\

Finally, let $| \in \textrm{ Spec } D_p(A)$. By Remark 3.2, $|$
induces a $p$-valuation $\ov{v}$ on $\textrm{ Quot } A/I(|)$. Thus
the canonical homomorphism $\delta : A \rightarrow A/I(|)$ maps $A$
to a $p$-valued field. \\

It remains to show that the existence of a $p$-archimedean element
$| \in \textrm{ Spec } D_p(A)$ provides us with some homomorphism
from $A$ to $\BQ_p$.

We may assume that $|$ is maximal in $\textrm{ Spec } D_p(A)$. Then
by Theorem 3.5, $I(|)$ is prime and $|$ corresponds to some
$p$-valuation $\ov{v}$ on $F = \textrm{ Quot } A/I(|)$ with value
group $\BZ$. In that case, however, the completion of $F$ w.r.t.
$\ov{v}$ is isomorphic to the field $\BQ_p$. Thus the desired
homomorphism is just the canonical homomorphism $\delta : A
\rightarrow A/I(|)$.
\end{proof}

\section{$p$-adic representations}

Now let us fix a commutative ring $A$ with $\BQ \subseteq A$
together with a $p$-archimedean $p$-divisibility $|_0$ on $A$. By
Theorem 3.6 the maximal spectrum \vspace{-6pt}
$$
X = \textrm{ Spec}^{max} D_p(A, |_0 ) \vspace{-6pt}
$$
is non-empty, and by Theorem 3.4.(3.) it is a $0$-dimensional
compact space. By Theorem 3.5 every $| \in X$ induces a canonical
homomorphism
$$
\alpha_| : A \rightarrow \ov{A} = A/I(|) \subseteq F := \textrm{
Quot } \ov{A}
$$
together with a $p$-valuation $\ov{v} : F \rightarrow \BZ \cup \{
\infty \}$ such that $ a | b \Leftrightarrow \ov{v} (\ov{a}) \leq
\ov{v} (\ov{b})$ for all $a, b \in A$. The completion of $F$ w.r.t.
$\ov{v}$ is just the field $\BQ_p$ of $p$-adic numbers with $\ov{v}$
being the restriction of $v_p$ to F.\footnote{Note that every
element of $F$ has a canonical expansion as a power series in the
uniformizer $p$ with coefficients from $\{ 0, 1, \ldots, p-1 \}$
(cf. \cite{E-P}, Proposition 1.3.5).} As $\BQ$ is dense in $\BQ_p$
w.r.t. the topology induced by the $p$-adic valuation $v_p$ on
$\BQ_p$, the embedding of $F$ into $\BQ_p$, is uniquely determined.
Thus every $| \in X$ yields a canonical homomorphism
$$
\alpha_| : A \rightarrow \BQ_p
$$
with $ a | b \Leftrightarrow v_p (\alpha_| (a)) \leq v_p (\alpha_|
(b))$ for all $a,b \in A$. Therefore, every $a \in A$ induces a
canonical map $\widehat{a}$ from $A$ to $\BQ_p$ by taking
\vspace{-6pt}
$$
\widehat{a} (|) := \alpha_| (a) \vspace{-6pt}
$$
for every `point' $|$ in $X$. \\

\begin{Thm}
Let $A$ be a commutative ring with $\BQ \subseteq A$ and let $|_0$
be a $p$-archimedean $p$-divisibility on $A$. Then the map
$\widehat{a}$ is continuous for every $a \in A$. Therefore $\phi : A
\rightarrow C(X, \BQ_p )$ defined by $\phi (a) = \widehat{a}$ is a
homomorphism of rings with dense image $\phi (A)$ in $C(X, \BQ_p )$,
satisfying
$$
a |_0 b \Rightarrow \phi (a) |^* \phi (b), \textrm{ for all } a, b
\in A.
$$
\end{Thm}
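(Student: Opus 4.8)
The plan is to prove the four assertions in order — continuity of each $\widehat{a}$, the ring-homomorphism property of $\phi$, the divisibility implication, and density of $\phi(A)$ — the first three being direct consequences of the construction of the maps $\alpha_|$ set up just before the theorem, while density is the substantial point. For continuity, recall from Theorem 3.4(3) that the sets $V(c,d)=U(dp,c)$ are clopen in $\textrm{Spec}\,D_p(A)$, so their intersections with $X$ are clopen in $X$; and since $\BQ$ is dense in $\BQ_p$, the balls $B_n(c)=\{y\in\BQ_p : v_p(y-c)\ge n\}$ with $n\in\BZ$ and $c\in\BQ$ form a basis for the topology of $\BQ_p$. For $a\in A$, $c\in\BQ\subseteq A$ and $n\in\BZ$: since $\alpha_|$ is a unital ring homomorphism it fixes $\BQ$, so $\widehat{a}(|)-c=\alpha_|(a-c)$ and $\alpha_|(p^n)=p^n$ has $v_p$-value $n$; hence $v_p(\widehat{a}(|)-c)\ge n$ is equivalent to $p^n\mid a-c$ in $|$. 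Therefore $\widehat{a}^{-1}(B_n(c))=V(p^n,a-c)\cap X$ is clopen in $X$, so $\widehat{a}$ is continuous, and in particular $\phi$ does map $A$ into $C(X,\BQ_p)$.

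Since the operations on $C(X,\BQ_p)$ are pointwise and each $\alpha_|$ is a unital ring homomorphism, $\phi(a+b)(|)=\alpha_|(a)+\alpha_|(b)$, $\phi(ab)(|)=\alpha_|(a)\alpha_|(b)$ and $\phi(1)(|)=1$, so $\phi$ is a unital ring homomorphism. For the divisibility implication, every $|\in X$ extends $|_0$ by definition of $D_p(A,|_0)$; hence $a\,|_0\,b$ forces $a\mid b$, i.e.\ $v_p(\alpha_|(a))\le v_p(\alpha_|(b))$, for every $|\in X$, and by the definition of $|^*$ this says precisely $\phi(a)\,|^*\,\phi(b)$.

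For density, first observe that the closure $B:=\overline{\phi(A)}$ in $C(X,\BQ_p)$ is a closed subring containing the closure of $\phi(\BQ)$, i.e.\ containing all constant functions with values in the closure of $\BQ$ in $\BQ_p$, which is all of $\BQ_p$; consequently $B$ is a closed $\BQ_p$-subalgebra of $C(X,\BQ_p)$. Next, $\phi(A)$ separates the points of $X$: if $\widehat{a}(|_1)=\widehat{a}(|_2)$ for all $a\in A$, then $\alpha_{|_1}=\alpha_{|_2}$ as maps $A\to\BQ_p$, and since $a\mid b\Leftrightarrow v_p(\alpha_|(a))\le v_p(\alpha_|(b))$ this forces $|_1=|_2$. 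Finally $X$ is compact Hausdorff (the clopen sets $V(c,d)\cap X$ separate its points) and zero-dimensional. By the non-archimedean Stone--Weierstrass theorem, a point-separating subalgebra of $C(X,\BQ_p)$ over the complete field $\BQ_p$ that contains the constants is dense; applied to $B$ this gives $B=C(X,\BQ_p)$, i.e.\ $\phi(A)$ is dense.

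The only genuine obstacle is density, and the strategy above quarantines the analytic content in the non-archimedean Stone--Weierstrass theorem; its hypotheses are then immediate from the earlier sections — $X$ was shown compact and zero-dimensional in Section 3, and point-separation holds because a $p$-divisibility $|\in X$ is recovered outright from the family of values $(\widehat{a}(|))_{a\in A}$.
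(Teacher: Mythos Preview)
Your proof is correct and follows essentially the same route as the paper: continuity via $\widehat{a}^{-1}(B_n(c))=V(p^n,a-c)\cap X$ and Theorem~3.4(3), then density via the $p$-adic Stone--Weierstrass theorem (Kaplansky) after checking point-separation. Your write-up is in fact slightly more careful than the paper's in two places: you spell out that the closure of $\phi(A)$ contains all $\BQ_p$-constants (not just $\BQ$-constants) before invoking Stone--Weierstrass, and your contrapositive point-separation argument --- recovering $|$ from the family $(\alpha_|(a))_{a\in A}$ --- is cleaner than the paper's ``either $\widehat{a}$ or $\widehat{b}$ separates'' formulation, though the two are equivalent.
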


\begin{proof}
As $\BQ$ is dense in $\BQ_p$, the sets
$$
U_n (r) = \{ x \in \BQ_p ; \ v_p (x - r) \geq n \}, \ r \in \BQ, \ n
\in \BN
$$
form a base for the topology on $\BQ_p$. Thus it suffices to show
that the preimage of $U_n (r)$ under $\widehat{a}$ is open in the
topology of $X$. This, however, follows from Theorem 3.4.(3.) and
the fact that
$$
(\widehat{a})^{-1} (U_n (r)) = \{ | \in X; \ p^n | a - r \} = V
(p^n, a-r) \cap X
$$
for all $a \in A, r \in \BQ$ and $n \in \BN$.

In order to show that $\phi (A)$ is dense in $C(X, \BQ_p)$ w.r.t.
the maximum norm it suffices by the $p$-adic Stone-Weierstrass
Approximation (see \cite{K}) to show that two different points of
$X$, say $|_1 \not = |_2$ can always be separated by some function
$\widehat{a}$, i.e., $\widehat{a} (|_1) \not = \widehat{a} (|_2)$:
Let $a, b \in A$ distinguish $|_1$ from $|_2$, say $a |_1 b$ and $a
\nmid_2 b$. Then either $\widehat{a}$ or $\widehat{b}$ separates
$|_1$ from $|_2$, as it is easily checked.
\end{proof}

Let $X$ be a compact space. We then denoted by $C(X, \BQ_p)$ the
ring of all $\BQ_p$-valued continuous functions on $X$. This ring
carries a canonical $p$-adic norm which makes it a $p$-adic Banach
algebra over $\BQ_p$. The norm is defined by \vspace{-6pt}
$$
\parallel f \parallel^* := \textrm{ max } \{ | f(x) |_p; \ x \in X \} \vspace{-6pt}
$$
where $| \ |_p$ is the $p$-adic absolute value on $\BQ_p$ defined by
$|x|_p = p^{-v_p (x)}$.

The norm $\parallel \ \parallel^*$ on $C(X, \BQ_p)$ is even
\textit{power multiplicative}, i.e., for all $n \in \BN$
\vspace{-6pt}
$$
\parallel f^n \parallel^* = ( \parallel f \parallel^*)^n. \vspace{-6pt}
$$

Theorem 4.1 provides us with a homomorphism $\phi : A \rightarrow C
(X, \BQ_p)$ with dense image. We have, however, no information about
the kernel of $\phi$. In order to achieve this goal we shall
introduce one more condition on the $p$-divisibility $|_0$ of
Theorem 4.1.

Let us assume that $|$ is a $p$-archimedean $p$-divisibility on the
commutative ring $A$ with $\BQ \subseteq A$. We can then define for
every $a \in A$ \vspace{-6pt}
$$
\textrm{ord } a := \textrm{ sup } \{ m \in \BZ; \ p^m | a \} \in \BZ
\cup \{ \infty \} \textrm{ and } \parallel a \parallel = p^{-
\textrm{ ord } a}.
$$

\noindent \begin{Lemma} \textit{For all $a,b \in A, r \in \BQ$ we
get}
\begin{enumerate}
\item[\textit{(a)}] $\parallel a+b \parallel \leq$ \textrm{max}
$(\parallel a \parallel, \parallel b \parallel)$
\item[\textit{(b)}] $\parallel a \cdot b \parallel \leq \parallel a \parallel \ \parallel b \parallel$
\item[\textit{(c)}] $\parallel r \parallel = | r |_p$
\item[\textit{(d)}] $\parallel ra \parallel = | r |_p \parallel a
\parallel$.
\end{enumerate}
\end{Lemma}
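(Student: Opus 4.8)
My plan is to reduce all four estimates to one structural fact about $\mid_0$, and then obtain $(a)$--$(d)$ by routine use of the divisibility axioms. Write $R_0:=\{x\in A:\ 1\mid_0 x\}$ and, for $a\in A$, $S_a:=\{m\in\BZ:\ p^m\mid_0 a\}$, so $\textrm{ord}\,a=\sup S_a$. The key fact is that $R_0$ is a subring of $A$ containing $\BZ_{(p)}$; equivalently, that each $S_a$ is downward closed, so $S_a=(-\infty,\textrm{ord}\,a]\cap\BZ$ (or $S_a=\BZ$ when $\textrm{ord}\,a=\infty$), and in particular $1\mid_0 p$. That $R_0$ is a subring containing $\BZ$ is immediate from axioms $(1)$--$(4)$, and once $1\mid_0 p$ is known, $(4)$ and $(2)$ give downward closedness of each $S_a$. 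Two remarks will be used repeatedly: (i) multiplying $p^j\mid_0 a$ by $p^{\pm k}$ via $(4)$ (legitimate as $\BQ\subseteq A$) shows $p^j\mid_0 a\Leftrightarrow p^{j+k}\mid_0 p^k a$, so $S_{p^k a}=S_a+k$ and $\textrm{ord}(p^k a)=\textrm{ord}\,a+k$; (ii) from $p^r\mid_0 a$ and $p^s\mid_0 b$, two uses of $(4)$ and one of $(2)$ give $p^{r+s}\mid_0 ab$. Finally, $I(\mid_0)$ being a proper ideal gives $0\nmid_0 1$, so $(6)$ forces $p\nmid_0 1$; hence $0\in S_1$ and $1\notin S_1$, so (granting the key fact) $\textrm{ord}\,1=0$.

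Proving the key fact is the main obstacle, and it is here that the $p$-archimedean axiom $(8)$ is genuinely needed: one cannot argue inside $\BQ$ alone, since for odd $p$ the ring generated by the Kochen values $\gamma(x)$, $x\in\BQ$, contains no element of even denominator, while $1/2\in\BZ_{(p)}$. I would proceed as follows. Fix $q\in\BZ$ coprime to $p$; I claim $1\mid_0 1/q$. By $(8)$ there is $m\in\BZ$ with $p^m\mid_0 1/q$, and multiplying by $qp^{-m}$ via $(4)$ gives $q\mid_0 p^{-m}$. Now fix any element of $X=\textrm{Spec}^{\textrm{max}}D_p(A,\mid_0)$ extending $\mid_0$ (there is one, $X$ being non-empty by Theorem $3.6$) and let $\alpha:A\to\BQ_p$ be its canonical homomorphism, which fixes $\BQ$ and for which $a\mid_0 b$ implies $v_p(\alpha(a))\le v_p(\alpha(b))$; then $q\mid_0 p^{-m}$ gives $0=v_p(q)\le v_p(p^{-m})=-m$, so $k:=-m\ge 0$ and $q\mid_0 p^k$. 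Choose $x,y\in\BZ$ with $qx+p^k y=1$. Then $q\mid_0 qx$ (from $1\mid_0 x$ and $(4)$) and $q\mid_0 p^k y$ (from $q\mid_0 p^k$, $1\mid_0 y$ and $(2)$), and hence $q\mid_0 qx+p^k y=1$ using $q\mid_0 0$ and $(3)$ twice; by $(4)$, $1\mid_0 1/q$. As $R_0$ is a ring this gives $\BZ_{(p)}\subseteq R_0$, so $1\mid_0 p^j$ for all $j\ge 0$ and each $S_a$ is downward closed. (Axiom $(7)$ enters only through Theorem $3.6$, whose proof invokes the Kochen relations and Theorem $2.5$ to produce the required element of $X$.)

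Granting the key fact, the estimates follow quickly. Part $(d)$ contains $(c)$ as the case $a=1$ (using $\textrm{ord}\,1=0$): writing $r=p^k u$ with $u\in\BZ_{(p)}^{\times}$, we have $u,u^{-1}\in R_0$, so applying $(4)$ to $1\mid_0 u$ and to $1\mid_0 u^{-1}$ gives $p^k a\mid_0 ra$ and $ra\mid_0 p^k a$; with $(2)$ this yields $S_{ra}=S_{p^k a}=S_a+k$, hence $\textrm{ord}(ra)=\textrm{ord}\,a+k$ and $\parallel ra\parallel=|r|_p\parallel a\parallel$ (and $\parallel r\parallel=|r|_p$). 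For $(a)$, put $\mu=\min(\textrm{ord}\,a,\textrm{ord}\,b)$; by downward closedness $p^\mu\mid_0 a$ and $p^\mu\mid_0 b$, so $(3)$ gives $p^\mu\mid_0 -b$ and then $p^\mu\mid_0 a-(-b)=a+b$, i.e. $\textrm{ord}(a+b)\ge\mu$. For $(b)$, if $\textrm{ord}\,a$ and $\textrm{ord}\,b$ are finite the suprema are attained, so remark (ii) gives $p^{\textrm{ord}\,a+\textrm{ord}\,b}\mid_0 ab$; if one of them is $\infty$, the same remark with arbitrarily large exponents gives $\textrm{ord}(ab)=\infty$. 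In either case $\parallel ab\parallel\le\parallel a\parallel\parallel b\parallel$, completing the proof.
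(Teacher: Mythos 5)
Your proof is correct, and its crux coincides with the paper's: everything reduces to showing $q\mid_0 1$ for every integer $q$ prime to $p$, which both you and the paper (Proposition 4.3, applied to the restriction of $\mid_0$ to $\BQ$) obtain from the $p$-archimedean axiom together with a B\'ezout identity $kp^r+lq=1$. Two differences are worth noting. First, to ensure that the exponent in $q\mid_0 p^{k}$ is non-negative you make a detour through Theorem 3.6, picking a point of $\textrm{Spec}^{\textrm{max}}D_p(A,\mid_0)$ and its canonical homomorphism into $\BQ_p$. This is legitimate and not circular (nothing in Section 3 or in Theorem 4.1 depends on Lemma 4.2), but it is much heavier than necessary: the downward closedness of the sets $S_a$, which you have already derived from $1\mid_0 p$ before this point, lets you simply replace $m$ by $\min(m,0)$ and take $k=-m\ge 0$ at once. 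The paper does exactly this, so its proof of (c) stays entirely inside $\BQ$ and uses no spectral machinery. Second, you prove (d) directly from $S_{ra}=S_a+k$, whereas the paper deduces (d) from (b) and (c) via the two submultiplicativity estimates $\parallel ra\parallel\le|r|_p\,\parallel a\parallel$ and $\parallel a\parallel\le|r|_p^{-1}\parallel ra\parallel$; both routes are equally short, and yours has the mild advantage of not needing the multiplicativity of $\parallel\ \parallel$ on $\BQ$ as a separate input.
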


\begin{proof}
(a) and (b) are easily checked. (c) is equivalent to $\textrm{ord }
r = v_p (r)$, and will be shown in Proposition 4.3 below.
\newline (d) then follows from $\parallel ra \parallel \leq
\parallel r \parallel \ \parallel a \parallel = |r|_p \ \parallel a
\parallel$ and
$\parallel a \parallel = \parallel r^{-1} r a \parallel \leq
\parallel r^{-1}\parallel \ \parallel ra \parallel.$
In fact, since by (c), $\parallel \ \parallel$ is multiplicative on
$\BQ$, we then get $ | r |_p \ \parallel a \parallel = \parallel
r^{-1} \parallel^{-1} \
\parallel a \parallel \leq \newline \parallel ra \parallel.$
\end{proof}

It remains to show $\textrm{ord } r = v_p (r)$ for $r \in \BQ$. This
follows from
\begin{Proposition}
The only p-archimedean divisibility with $p \nmid 1$ of the field
$\BQ$ of rational numbers is the one obtained by the p-adic
valuation $v_p$.
\end{Proposition}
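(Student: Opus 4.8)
The plan is to reduce the statement to the identification of a single subring of $\BQ$. Since $\BQ$ is a field, its only proper ideal is $\{0\}$, so the support $I(\mid)$ is $\{0\}$ and $0\mid a$ holds precisely when $a=0$. For $a\neq 0$, axiom (4) applied first with the multiplier $a^{-1}$ and then with the multiplier $a$ shows that $a\mid b$ is equivalent to $1\mid a^{-1}b$; hence the relation $\mid$ is completely determined by the set
$$
B:=\{x\in\BQ:\ 1\mid x\}.
$$
First I would verify that $B$ is a subring of $\BQ$ with $\BZ\subseteq B$: it contains $1$ by axiom (1) and $0$ as a consequence of the axioms; it is an additive subgroup of $\BQ$ by axiom (3) (using $1\mid 0$ to pass to negatives); and it is closed under multiplication, since $1\mid x$ gives $y\mid xy$ by axiom (4), and then $1\mid y$ together with axiom (2) gives $1\mid xy$. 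So the claim to prove becomes exactly $B=\BZ_{(p)}$, the valuation ring $\{x\in\BQ:\ v_p(x)\ge 0\}$ of $v_p$.

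Next I would pin down $B$ using the two standing hypotheses. After the reduction above, the hypothesis $p\nmid 1$ is exactly the statement $p^{-1}\notin B$; since $B$ is a ring containing $\BZ$, this already forces $p^{-n}\notin B$ for every $n\ge 1$. For the inclusion $\BZ_{(p)}\subseteq B$ I would use $p$-archimedeanness applied to the elements $1/q$, where $q$ ranges over the primes different from $p$: it provides some $m\in\BZ$ with $p^{m}\mid 1/q$, i.e.\ $p^{-m}q^{-1}\in B$; multiplying by $q\in\BZ\subseteq B$ gives $p^{-m}\in B$, so $m\le 0$ because no negative power of $p$ lies in $B$, and a Bézout identity between the coprime integers $p^{-m}$ and $q$ then yields $q^{-1}\in B$. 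Thus $B$ contains the inverse of every prime other than $p$, whence $\BZ_{(p)}\subseteq B$. For the reverse inclusion, if $x\in B$ had $v_p(x)<0$, I would write $x=p^{v_p(x)}\,u/v$ with $u,v$ integers prime to $p$; then $u^{-1}\in B$ (it is $\pm 1$ times a product of inverses of primes $\neq p$) and $v\in B$, so $x\cdot(v/u)=p^{v_p(x)}\in B$, contradicting $p^{-n}\notin B$. Hence $B=\BZ_{(p)}$.

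Finally I would read off the conclusion: for $a\neq 0$ we get $a\mid b\Leftrightarrow a^{-1}b\in B=\BZ_{(p)}\Leftrightarrow v_p(b)\ge v_p(a)$, and for $a=0$ we have $0\mid b\Leftrightarrow b=0$, which with the usual convention $v_p(0)=\infty$ also reads $v_p(a)\le v_p(b)$; so $\mid$ is the $p$-adic divisibility $\mid_p$. The step I expect to be the real work is the identification $B=\BZ_{(p)}$ in the middle paragraph: one must check carefully that $p$-archimedeanness — even used only for arguments of the shape $1/q$ — together with $p\nmid 1$ forces into $B$ exactly the inverses of the primes $\neq p$ while keeping every negative power of $p$ outside. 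The tools are elementary (unique factorization and Bézout in $\BZ$), but this is where a miscalculation would most easily creep in.
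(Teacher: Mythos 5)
Your proof is correct and follows essentially the same route as the paper: reduce the divisibility to the subring $B=\{x\in\BQ:\ 1\mid x\}$ of $\BQ$, then use $p$-archimedeanness together with a B\'ezout identity to force the inverses of integers prime to $p$ into $B$, while $p\nmid 1$ keeps $p^{-1}$ out. The only (harmless) difference is at the end: the paper concludes $B=\BZ_{(p)}$ by citing that a proper overring of the valuation ring $\BZ_{(p)}$ inside $\BQ$ must be $\BQ$ itself, whereas you verify the inclusion $B\subseteq\BZ_{(p)}$ directly and elementarily; both are fine.
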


\begin{proof}
The support $I(|)$ is a proper ideal of $\BQ$. Hence $I(|) = \{ 0
\}$. Moreover, as $\BQ$ is a field, axiom (4) implies that $|$ has
cancellation. Thus by Example 2.2 it suffices to show that the ring
$B = \{ \frac{b}{a}; \ a, b \in \BQ, \ a \not = 0, \ a|b \} \cup \{
0 \}$ contains the valuation ring $\BZ_{(p)}$ of $v_p$ restricted to
$\BQ$. In fact, then also $B$ is a valuation ring of $\BQ$, hence
has to be equal to $\BZ_{(p)}$ (cf. \cite{E-P}, Theorem 2.1.4). Note
that $B \not = \BQ$ as $p^{-1} \not \in B$.

Let $n,m \in \BZ$ and $n$ prime to $p$. We have to show that $n |
m$. As $|$ is $p$-archimedean there exists $r \in \BN$ such that $
p^{-r} | n^{-1}$. Therefore $n | p^r$. Since $n$ is prime to $p$
there exist $k, l \in \BZ$ with \vspace{-6pt}
$$
k p^r + ln =1.
$$
Since $n | p^r$, also $n | k p^r$. Clearly also $n | ln$. Thus (by
(3)) $n | 1$. Hence $n | m$.
\end{proof}

By Lemma 4.2, $\parallel \ \parallel$ is a sub-multiplicative
$p$-adic semi-norm on $A$. In the next lemma we shall give
equivalent conditions for $\parallel \ \parallel$ to be even power
multiplicative. Note that in this case $\parallel a^n \parallel = 0$
is equivalent to $\parallel a \parallel = 0$. It is well-known that
power multiplicativity is already implied from the case $n = 2$.

\begin{MLemma} Let $|_0$ be a $p$-archimedean $p$-divisibility on
$A$. Then the following three conditions are equivalent:
\vspace{-6pt}
\begin{itemize} \item[(i)] $p |_0 a^2 \Rightarrow p |_0 a$ for all
$a \in A$, \item[(ii)] the norm $\parallel \
\parallel$ defined by $|_0$ is power multiplicative. \item[(iii)]
(Local-Global-Priciple) Let $X = \textrm{ Spec}^{\textrm{max}} D_p
(A, |_0)$. Then $p \ | \ a$ for all $| \in X$ implies $p |_0 a$.
\end{itemize}
\end{MLemma}

\begin{proof}
(iii) $\Rightarrow$ (i) follows from Theorem 4.1 and the fact that
all $|\in X$ satisfy (i). \newline (i) $\Rightarrow$ (ii): As $\parallel
a^2
\parallel \leq
\parallel a
\parallel^2$ is obvious, it remains to prove $\parallel a \parallel^2 \leq \parallel a^2
\parallel$. By the definition of $\parallel \ \parallel$, this
amounts to prove that ord $a^2 \leq 2 \textrm{ ord }a$. Let $m =
\textrm{ ord }a$ and assume $p^{2m+1} \ | a^2$. Then clearly $p |
(ap^{-m})^2$. Hence by (i) we would get $p | ap^{-m}$ or
equivalently $p^{m+1} | a$, a contradiction. \newline (ii)
$\Rightarrow$ (iii): Let us assume $p \nmid_0 a$. We shall then
construct some extension $| \in \textrm{ Spec } D_p (A, |_0)$ such
that $a \ | \ 1$. This clearly implies $p \nmid a$. The extension
$|$ of $|_0$ will be obtained in three steps: \begin{itemize}
\item In step 1 we construct $|_1 \supseteq |_0$ such that $a \ |_1 \
1$ and $|_1$ satisfies all axioms of a $p$-divisibility except (6).
Instead, we shall only obtain $p \nmid 1$. \item In step 2 we
maximalize $|_1$ to $|_2$ such that $|_2$ satisfies axiom (6), hence
is a $p$-divisibility.
\item In step 3 we apply Theorem 3.6 to $D_p (A, |_2)$ in order to obtain
$| \in \textrm{ Spec } D_p (A, |_0)$ with $a \ | \ 1$. \end{itemize}

For step 1 and 2 we need a little preparation: We call an additive
subgroup $C$ of $A$ \textit{convex} w.r.t. $|$ if for all $a, b \in
A$ we have: $a \in C, a \ | \ b \Rightarrow b \in C$. For a subset
$S$ of $A$ we define the convex group $C(S)$ generated by $S$ to be
obtained by iterating countably many times in alternating order the
two operations
$$
\begin{array}{ccl} G(S) & = & \textrm{ additive group generated by } S \\
M(S) & = & \{ b \in A; x \ | \ b \textrm{ for some } x \in S \}.
\end{array}
$$
Then $C(S)$ is a convex subgroup of $A$ containing $S$. The operator $C$ obviously
satisfies $S \subseteq C (S) = CC (S)$ and $a C (S) \subseteq C (aS)$. Moreover we have
$$
a \ | \ b \Rightarrow C ( \{ b \} \cup S) \subseteq C ( \{ a \} \cup
S).
$$

\textbf{Step 1:} We define $x |_1 y : \Leftrightarrow y a^r \in C (
\{ x a^i; 0 \leq i \leq r \} )$ for some $r \in \BN$. First observe
that $|_1$ extends $|_0$. In fact: $x |_0 y \Rightarrow y \in C (x)
( r = 0)$. Moreover we get $a \ |_1 \ 1$ since $a \in C ( \{ a, a^2
\} ) (r = 1)$. Next one checks the axioms (1) - (4) using the above
mentioned properties of the operator $C$. The axioms (7) and (8)
follow from $|_0 \subseteq |_1$. It remains to prove $ p \nmid_1 1$
(then also axiom (5) follows). Let us assume on the contrary the
existence of some $r \in \BN$ such that
$$
a^r \in C ( \{ p a^i; 0 \leq i \leq r \}).
$$
By (ii) we have ord $a^i = i$ ord $a$. From our assumption $p
\nmid_0 a$ we get ord $a \leq 0$. Hence we have the following
contradiction:
$$
\textrm{ord } a^r \geq 1 + \textrm{ min }_{0 \leq i \leq r} \textrm{
ord } a^i = 1 + \textrm{ ord } a^r.
$$

\textbf{Step 2:} Let now $|_2$ be an extention of $|_1$ maximal with
the properties (1) - (4), (7), (8) and $p \nmid_2 1$. We then prove
(6) for $|_2$. Assume $cp |_2 c$ for some $c \in A$. Since $|_2$ is
$p$-archimedean, to every $b \in A$ we find some $m \in \BN$ such
that $1 \ |_2 \ p^m b c$. Applying $pc \ |_2 c$ iteratively yields
$p \ |_2 \ p^{1+m} b c \ |_2 \ p^m bc \ |_2 \ p^{m-1} b c \ |_2
\cdots |_2 \ pbc \ |_2 \ bc.$

Now define
$$
x \ |' \ y : \Leftrightarrow y \in C (cA \cup \{ x \} ).
$$
Clearly $|'$ extends $|_2$. The axioms (1) to (4) are easy to check
and the axioms (7) and (8) are interited. Since $p\ |_2 \ bc$ for
all $b \in A$, we find $C (cA \cup \{ p \} ) \subseteq C (p)$. As $p
\nmid_2 1$ we therefore get $1 \not \in C (cA \cup \{ p \} )$, i.e.
$p \nmid ' 1$. Since $|_2$ was maximal with these properties, we
have $|_2 = | '$, and therefore $0 \ | ' \ c$ yields $0 \ |_2 \ c$.
Thus we have shown that $|_2$ is a $p$-divisibility.
\end{proof}

\newpage \noindent \begin{RThm} \textit{Let $A$ be a commutative ring with
$\BQ \subseteq A$ that admits a $p$-archimedean $p$-divisibility
$|_0$ satisfying $p |_0 a^2 \Rightarrow p |_0 a$ for all $a \in A$.
Then the homomorphism $\phi: A \rightarrow C(X, \BQ_p)$ of Theorem
4.1 with} $X = \textrm{ Spec}^{max} D_p (A, |_0)$ \textit{satisfies}
$\parallel \phi (a)
\parallel^* = \parallel a \parallel_0.$ \textit{Consequently:
\begin{itemize}
\item ker $\phi = \{ a \in A; p^n\ |_0 \ a \textit{ for all } n \in \BN \}$,
\item $\phi$ is injective, if the semi-norm $\parallel \ \parallel_0$
defined by $|_0$ is a norm,
\item $\phi$ is surjective, if $A$ is complete w.r.t. the norm $\parallel \
\parallel_0$.
\end{itemize}}
\end{RThm}

\begin{proof}
By Theorem 4.1 and Main Lemma 4.4 we have $\parallel \phi (a)
\parallel^* =
\parallel a
\parallel_0$. Thus if $\parallel \ \parallel_0$ is a norm, $\phi (a) =
0$ implies $\parallel a \parallel_0 = 0$ and hence $a = 0$. This
proves injectivity.

In order to get surjectivity, let $f \in C(X, \BQ_p)$ be given. As
$\phi (A)$ is dense in $C(X, \BQ_p)$ by Theorem 4.1, there exists a
sequence $(\phi (a_n))_{n \in \BN}, a_n \in A$, converging to $f$.
Then clearly $(a_n )_{n \in \BN}$ is a Cauchy-sequence in $(A,
\parallel \ \parallel_0)$. Thus by completeness there exists a limit
$a$ of $(a_n)_{n \in \BN}$ in $A$. Now $\phi (a) = f$.
\end{proof}

From Theorem 4.5 we finally get our
\begin{CThm}
Let $A$ be a commutative ring with $\BQ \subseteq A$. Then, as a
ring, $A \cong C(X, \BQ_p)$ for some compact (actually
$0$-dimensional) space $X$ if and only if there exists a
$p$-divisibility $a \mid b$ on $A$ such that \vspace{-6pt}
\begin{enumerate}
\item[(i)] $A$ is $p$-archimedean with respect to $\mid$,\vspace{-6pt}
\item[(ii)] the $p$-adic semi-norm canonically
defined by $\mid$ on $A$ is a norm satisfying $\parallel a^2
\parallel = \parallel a \parallel^2$ for all $a \in A$,\vspace{-6pt}
\item[(iii)] $A$ is complete with respect to this norm.
\end{enumerate}
\end{CThm}

So far the characterization 4.6 does not seem to be a completely
algebraic one, as it involves the binary relation $|$. There is,
however, a way to avoid this. The canonical $p$-adic divisibility
$|^*$ on $C(X, \BQ_p)$ can actually be algebraically expressed in
the following way

\begin{Proposition}
The canonical $p$-adic divisibility $|^*$ on $C(X, \BQ_p)$, $X$ a
compact space, satisfies for all $f,g \in C(X, \BQ_p)$ \vspace{-6pt}
$$
g |^* f \Leftrightarrow \exists h \ h^q = g^q + p f^q \vspace{-6pt}
$$
where $q \in \BN$ is a prime different from $p$.
\end{Proposition}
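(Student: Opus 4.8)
The plan is to reduce the Proposition to a pointwise statement in $\BQ_p$ and then deal with continuity. First I would prove the local fact: for $a,b\in\BQ_p$, with the convention $v_p(0)=\infty$, one has $v_p(a)\le v_p(b)$ if and only if $a^q+pb^q$ is a $q$-th power in $\BQ_p$. The key tool is that $q$, being a prime different from $p$, is a unit of $\BZ_p$, so Hensel's Lemma produces for every $u\in1+p\BZ_p$ a unique $q$-th root $\sqrt[q]{u}\in1+p\BZ_p$; moreover $u\mapsto\sqrt[q]{u}$ is an isometry of $1+p\BZ_p$ onto itself, since $u_1-u_2=(\sqrt[q]{u_1})^q-(\sqrt[q]{u_2})^q=(\sqrt[q]{u_1}-\sqrt[q]{u_2})\sum_{j=0}^{q-1}(\sqrt[q]{u_1})^{j}(\sqrt[q]{u_2})^{q-1-j}$ and the sum is $\equiv q\pmod p$, hence a unit. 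Granting this, ``$\Rightarrow$'' of the local fact follows, for $a\neq0$, by putting $u:=1+pb^q/a^q\in1+p\BZ_p$ and $c:=a\,\sqrt[q]{u}$ (and $c=0$ in the degenerate case $a=0$, which forces $b=0$); and ``$\Leftarrow$'' follows because $v_p(a)>v_p(b)$ would give $v_p(a^q+pb^q)=1+q\,v_p(b)\not\equiv0\pmod q$, so that $a^q+pb^q$ could not be a $q$-th power.

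Given the local fact, the direction ``$\Leftarrow$'' of the Proposition is immediate: if $h\in C(X,\BQ_p)$ satisfies $h^q=g^q+pf^q$, then for each $x\in X$ the element $g(x)^q+pf(x)^q=h(x)^q$ is a $q$-th power in $\BQ_p$, so $v_p(g(x))\le v_p(f(x))$, i.e.\ $g|^*f$.

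For ``$\Rightarrow$'', assume $g|^*f$. On the open set $U=\{x\in X:g(x)\neq0\}$ the function $x\mapsto f(x)/g(x)$ is continuous with values in $\BZ_p$ (as $v_p(f(x))\ge v_p(g(x))$), so $u(x):=1+p\,(f(x)/g(x))^q$ is a continuous map $U\to1+p\BZ_p$; I would set $h(x):=g(x)\,\sqrt[q]{u(x)}$ on $U$, which is continuous (the product of $g$ with the composite of $u$ and the isometry $\sqrt[q]{\,\cdot\,}$) and satisfies $h(x)^q=g(x)^q+pf(x)^q$ there. On the complement of $U$, the relation $g|^*f$ forces $f(x)=0$ whenever $g(x)=0$, so I would set $h(x):=0$, which again satisfies the identity. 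The only point needing verification is that the resulting $h$ is continuous at a point $x_0$ with $g(x_0)=0$: for each $N\in\BN$ the set $g^{-1}(p^N\BZ_p)$ is an open neighbourhood of $x_0$ on which $v_p(h(x))=v_p(g(x))\ge N$ whenever $g(x)\neq0$ (since $\sqrt[q]{u(x)}$ is a unit) and $h(x)=0$ otherwise, so $|h(x)|_p\le p^{-N}$ there. Hence $h\in C(X,\BQ_p)$ and $h^q=g^q+pf^q$.

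I expect the continuity of $h$ across the zero set of $g$ to be the only genuine obstacle; it works because $h$ vanishes exactly where $g$ does while its unit part $\sqrt[q]{u}$ never degenerates, so $v_p(h)=v_p(g)$ identically and $h$ is automatically small wherever $g$ is small. Equivalently, one can organize the argument around the fact that $(a,b)\mapsto$ (the canonical $q$-th root of $a^q+pb^q$) is a continuous map from $\{(a,b)\in\BQ_p\times\BQ_p:v_p(a)\le v_p(b)\}$ to $\BQ_p$, and that $h$ is this map composed with $(g,f)$.
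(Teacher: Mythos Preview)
Your argument is correct. Both directions of the local fact are sound, the isometry claim for the $q$-th root on $1+p\BZ_p$ is verified by your telescoping identity, and your continuity check at the zeros of $g$ is exactly right: since $\sqrt[q]{u(x)}$ stays a unit, $v_p(h(x))=v_p(g(x))$ on $U$, and this forces $h\to 0$ wherever $g\to 0$.

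Your route to ``$\Rightarrow$'' differs from the paper's. The paper argues that $v_p\bigl(g^q+pf^q\bigr)$ takes only values divisible by $q$, then invokes the finiteness of the residue field together with a clopen patching argument to reduce to the situation $g\equiv 1$ on a clopen piece $Y\subseteq X$, and finally applies Hensel's Lemma pointwise on each such piece. In contrast, you write down a single global formula $h=g\,\sqrt[q]{1+p(f/g)^q}$ on the open set $\{g\neq 0\}$, extend by zero, and verify continuity directly across the zero set of $g$. Your version avoids any decomposition of $X$ into clopen pieces and does not appeal to zero-dimensionality or the finite residue field; it also makes transparent the final remark of your proposal, namely that $(a,b)\mapsto a\,\sqrt[q]{1+p(b/a)^q}$ (with value $0$ at $a=0$) is a continuous map $\{(a,b):v_p(a)\le v_p(b)\}\to\BQ_p$, so $h$ is simply this map composed with $(g,f)$. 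The paper's patching approach, on the other hand, has the advantage of being the standard template one would use for analogous statements where no such closed-form global root is available.
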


\begin{proof}
``$\Leftarrow$'' Let $x \in X$. Then the values of $g^q (x)$ and $p
f^q (x)$ are different. From $h^q = g^q + p f^q$ we see that the
value of \vspace{-6pt}
$$
(g^q + p f^q) (x)
$$
has to be divisible by $q$. Hence $v_p (g^q (x)) < v_p (p f^q (x))$
which clearly implies $v_p (g (x)) \leq v_p (f (x))$. Thus by
definition $g |^* f.$ \\
``$\Rightarrow$'' Assuming $v_p (g(x)) \leq v_p (f (x))$  for all $x
\in X$ we have to construct a continuous function $h : X \rightarrow
\BQ_p$ such that $h^q = g^q + p f^q$.

Using the fact that the function $g^q + p f^q$ can only take values
in $\BZ$ all of which are divisible by $q$, the fact that the
residue class field is finite, and by patching $h$ from suitable
continuous functions, we are reduced to the case where $g = 1$ on an
open and closed subset $Y$ of $X$. Now we can apply Hensel's Lemma
to the 1-unit $1 + p f (x)^q$ (as the characteristic of the residue
field is different to $q$).
\end{proof}

Using Proposition 4.7 we may replace any use of $a | b$ in the
Characterization Theorem 4.6 by the algebraic expression
\vspace{-6pt}
$$
\textrm{(*)} \quad \exists c \ c^q = a^q + p b^q,
$$
requiring in addition that (*) is a $p$-divisibility satisfying
(i)-(iii). This way we obtain a completely algebraic
characterization of the rings $C(X, \BQ_p)$ with $X$ compact.

\strut \\

\strut \\
\strut \\
\strut \\
\strut \\

samuelvolkweisleite@gmail.com \newline alex.prestel@uni-konstanz.de

\end{document}